\documentclass[11pt]{amsart}
\usepackage[english]{babel}
\usepackage[utf8]{inputenc}
\usepackage{amsmath,amssymb, color,enumerate,amssymb,wasysym,mathrsfs,dsfont}
\usepackage[   backend=biber,
style=numeric,
sorting=none,
isbn=false,
maxbibnames=9]{biblatex}

\usepackage{xcolor}
 \usepackage{mathptmx}
 \usepackage[all,cmtip]{xy}
 \usepackage{csquotes}

 \DeclareMathAlphabet{\mathcal}{OMS}{cmsy}{m}{n}

\DeclareSymbolFont{rsfscript}{OMS}{rsfs}{m}{n}
\DeclareSymbolFontAlphabet{\mathrsfs}{rsfscript}

\DeclareSymbolFont{AMSb}{U}{msb}{m}{n}
\DeclareSymbolFontAlphabet{\mathbb}{AMSb}

\DeclareSymbolFont{eufrak}{U}{euf}{m}{n}
\DeclareSymbolFontAlphabet{\gothic}{eufrak}
\DeclareMathOperator{\op}{op}

\newcommand\Imm{\operatorname{Im}}

\newcommand{\Set}{\mathsf{Set}}
\newcommand{\Sym}{\operatorname{Sym}}
\newcommand{\Aut}{\operatorname{Aut}}

\newcommand{\Grp}{\mathsf{Grp}}

\newcommand{\id}{\operatorname{id}}

\newcommand{\RGrp}{\mathsf{RGrp}}

\DeclareMathOperator{\ind}{i}
\DeclareMathOperator{\per}{p}

\newcommand{\indd}[1]{\ind{#1}}
\newcommand{\perr}[1]{\per{#1}}

\usepackage{hyperref}
\usepackage{doi}
\usepackage[capitalise]{cleveref}

\newtheorem{theorem}{Theorem}[section]
\newtheorem{proposition}[theorem]{Proposition}
\newtheorem{lemma}[theorem]{Lemma}
\newtheorem{corollary}[theorem]{Corollary}
\newtheorem{definition}[theorem]{Definition}

\theoremstyle{definition}
\newtheorem{remark}[theorem]{Remark}

\newtheorem{Ex}[theorem]{Example}

\newcommand{\SGrp}{\mathsf{SGrp}}

\begin{document}
  \title{Right groups and the set-theoretic Yang--Baxter  equation}
  \author{A. Albano}
\address[Andrea Albano]{Dipartimento di Matematica  e Fisica ``Ennio De Giorgi'',\linebreak Università del Salento, 73100 Lecce, Italy}
 \email{andrea.albano@unisalento.it}
     \author{A. Facchini}
\address[Alberto Facchini]{Dipartimento di Matematica ``Tullio Levi-Civita'',\linebreak Universit\`a di 
Padova, 35121 Padova, Italy}
 \email{facchini@math.unipd.it}
\thanks{}
\author{M. Mazzotta}
\address[Marzia Mazzotta]{Dipartimento di Matematica e Fisica ``Ennio De Giorgi'', Università del Salento, 73100
Lecce, Italy}
\thanks{}
\email{marzia.mazzotta@unisalento.it}
\thanks{}
\author{P. Stefanelli}
\address[Paola Stefanelli]{Dipartimento di Matematica e Fisica ``Ennio De Giorgi'', Università del Salento, 73100
Lecce, Italy}
\thanks{}
\email{paola.stefanelli@unisalento.it}

   \keywords{Quantum Yang--Baxter equation,
Set-theoretic solution,
Skew brace,
Semibrace, Right Group}

      \begin{abstract} 
      In this paper, we provide techniques to obtain left non-degenerate set-theoretic solutions of the Yang--Baxter equation, drawing on the class of right groups.
     To this end, we introduce the new algebraic structures of left~$RG$-se\-mi\-bra\-ces, which include left (cancellative) semibraces as a proper subclass.
      \end{abstract}

    \maketitle

{\small 2020 {\it Mathematics Subject Classification.} Primary 16T25 Secondary 81R50}

\section*{Introduction} 
The \emph{Yang--Baxter equation} was first introduced by Yang \cite{Ya67} in Theoretical Physics and, independently, by Baxter \cite{Ba72} in Statistical Mechanics. Since then, the search for its solutions has drawn the attention of many researchers and remains an open problem. Due to the complexity of known solutions, Drinfel'd \cite{Dr92} suggested focusing on a simplified subclass. Namely, if $X$ is a set, a map
$r:X\times X\to X\times X$ is a \emph{set-theoretic solution of the Yang--Baxter equation}, briefly a \emph{solution}, if the identity
\begin{align}\tag{YBE}\label{YBE}
\left(r\times\id_X\right)
\left(\id_X\times \, r\right)
\left(r\times\id_X\right)
= 
\left(\id_X\times \, r\right)
\left(r\times\id_X\right)
\left(\id_X\times \, r\right)
\end{align}
is satisfied. We will write $r\left(x, y\right) = \left(\lambda_{x}\left(y\right), \rho_{y}\left(x\right)\right)$, where $\lambda_{x}$ and $\rho_{y}$ are maps from $X$ into itself, for all $x,y\in X$. Then
$r$ is a solution if and only if for all $x,y,z \in X$
    \begin{align}
     &\label{first} \lambda_x\lambda_y(z)=\lambda_{\lambda_x\left(y\right)}\lambda_{\rho_y\left(x\right)}\left(z\right)\tag{Y1}\\
    &  \label{second}\lambda_{\rho_{\lambda_y\left(z\right)}\left(x\right)}\rho_z\left(y\right)=\rho_{\lambda_{\rho_y\left(x\right)}\left(z\right)}\lambda_x\left(y\right)\tag{Y2}\\
      &\label{third}\rho_z\rho_y(x)=\rho_{\rho_z\left(y\right)}\rho_{\lambda_y\left(z\right)}\left(x\right)\tag{Y3}
  \end{align} 
We say that a solution $r$ is \textit{bijective} if $r$ is a bijective map and, in particular, \textit{involutive} if $r^2=\id_{X\times X}$; 
\textit{left non-degenerate} if $\lambda_x \in \Sym_X$, for all $x \in X$; 
\textit{right non-degenerate} if $\rho_x \in \Sym_X$, for all $x \in X$; \emph{non-degenerate} if it is both left and right non-degenerate. It has only recently been proven in \cite{JePi25} that all non-degenerate solutions are bijective.

The theory of braces was introduced in 2007 by Rump \cite{Ru07} as an algebraic approach to study involutive non-degenerate solutions. A \emph{left brace} is a set $B$ endowed with two binary operations, usually denoted by $+$ and $\circ$, such that $(B, +)$ is an abelian group, $(B, +)$ is a group, and the compatibility condition
\begin{align}\label{1}
    a \circ (b+c)=a \circ b-a+a\circ c
\end{align}
holds, for all $a, b, c \in B$, where $-a$ denotes the inverse of any $a$ in $(B, +)$. Later, Guarnieri and Vendramin \cite{GuVe17} introduced \emph{skew left braces} as a generalization of braces, removing the assumption that the group $(B,+)$ is abelian. Any skew left brace yields non-degenerate solutions.\\
Several further generalizations of braces have been proposed in the literature. Among them, we mention \emph{left (cancellative) semibraces}, introduced in \cite{CaCoSt17}. In the definition of skew left brace, instead of requiring $(B,+)$ to be a group, one assumes that $(B,+)$ is a left cancellative semigroup, and the compatibility condition becomes
\begin{align}\label{2}
    a \circ (b+c)=a \circ b+a \circ \left(a^-+c\right),
\end{align}
for all $a, b, c \in B$, where $a^-$ denotes the inverse of $a$ in the group $(B, \circ)$. 
Note that \eqref{1} and \eqref{2} are equivalent in skew left brace structures; hence, every skew left brace is a left (cancellative) semibrace. The motivation for studying these structures lies in the fact that they still produce solutions that are left non-degenerate. A more detailed study of these algebraic structures can be found in \cite{CaCeSt22}.\\
Subsequently, Jespers and Van Antwerpen \cite{JeAr19} relaxed the condition on the additive structure by introducing \emph{left semibraces} and 
assuming that $(B, +)$ is an arbitrary semigroup. These structures do not necessarily determine solutions (see \cite[Theorem 3]{CCSt20-2}). We also mention that, to study left non-degenerate solutions (not necessarily bijective), the authors in \cite{COJeVaVe22} introduced the algebraic structure of~YB-se\-mi\-truss\-es, which forms a subclass of the category of semitrusses as introduced by Brzeziński \cite{Brz18}.

\smallskip

In this paper, we show that the class of right groups yield a wider class of left non-degenerate solutions than those arising from a left (cancellative) semibrace $B$ whose additive structure is necessarily a right group (see \cite[p.~167]{CaCoSt17}). Recall that right groups are semigroups in which all left translations are bijective maps, or, equivalently, they are semigroups isomorphic to the external direct product of a group and a non-empty right zero semigroup (see \cite[Section~1.11, Theorem~1.27]{ClPr61}). To our end, we introduce and investigate the algebraic structure which we call a {\em left $RG$-semibrace}. It is a triplet $(B,+,\circ)$, where both $(B,+)$ and $(B,\circ)$ are right groups and the identity
\begin{align}\label{bjip-}
a\circ (b + c) = a\circ b+(-a+ a\circ c)
\end{align} 
holds, for all $a,b,c\in B$, where $-a+ a\circ c$ is the unique element $x$ of $B$ such that $a+x=a\circ c$. We prove that \eqref{bjip-} can be written as \eqref{2} and hence left (cancellative) semibraces form a  subclass of left $RG$-semibraces.\\ 
By using structural properties of the two right groups $(B,+)$ and $(B,\circ)$ and the proof technique developed in \cite[cf. Theorem 2.15]{DoRySte24}, we determine a left non-degenerate solution $r_B$ associated with $B$. Additionally, we deduce that the operation $+$ can influence the behaviour of $r_{B}$ by providing solutions that satisfy $r_B^{2n+1} = r_B$, with $n$ depending on the additive groupal component of $B$.
In particular, under certain conditions, we obtain cubic solutions, i.e., $r_B^3 = r_B$, or, more specifically, idempotent solutions.

Furthermore, exploiting the natural connection between averaging operators and right groups \cite[cf. Proposition~3.8]{HuhuXing25}, we employ them to obtain various classes of examples of $RG$-semibraces. Formally, we recall that a \emph{left averaging operator} on a group $B$ is a map $\varphi:B \to B$ such that the equality
$\varphi(a)\varphi(b) = \varphi\left( \varphi(a) b \right)$ is satisfied, for all $a,b\in B$. Note that any idempotent endomorphism of a group is an averaging operator on that same group. Finally, we extend the matched product of left semibraces, originally introduced in \cite{CCSt20-1}, to the wider framework of left $RG$-semibraces, and we specifically focus on the semidirect product as a particular case.

\medskip

\section{Left \texorpdfstring{$RG$-}{-}semibraces}

\subsection{Basic notions on right groups}
In this subsection, we provide basic notions on the class of right groups necessary throughout the paper. 

To this end, let us recall that a \emph{right simple semigroup} $(S,\,\cdot)$ is a semigroup such that $a \cdot S=S$, for all $a\in S$.
According to \cite[Lemma~1.26]{ClPr61}, every idempotent of a right simple semigroup $(S, \cdot)$ is a left identity for $S$.

In particular, a class of right simple semigroups is given by {\em right zero semigroups}, i.e., those semigroups $(S,\,\cdot)$ 
such that $a\cdot b=b$, for all $a,b\in~S$. Since the operation $\cdot$ corresponds to the second canonical projection~$\pi_2\colon S\times S\to S$, 
right zero semigroups are those of the form $(S,\pi_2)$ for some set $S$. Similarly, $a\,\pi_1\, b=a$ determines a {\em left zero semigroup}.  
The full subcategory of the category of semigroups whose objects are all right zero semigroups is clearly isomorphic to the category $\Set$ of sets, because every mapping between two right zero semigroups is a semigroup morphism.

\smallskip

The semigroups satisfying the equivalent conditions of he following result are called {\em right groups.} 

\begin{theorem}\label{1.1}  {\rm \cite[Section~1.11, Theorem~1.27]{ClPr61}} The following assertions on a semigroup $(S,\,\cdot)$, $S\ne\emptyset$, are equivalent:
\begin{itemize}
    \item[{\rm(a)}] $S$ is right simple 
and left cancellative.
\item[{\rm(b)}] For every $a,b\in S$, there exists a unique element $x\in S$ such that $a \cdot x = b$.
\item[{\rm(c)}] $S$ is right simple and contains at least an idempotent.
\item[{\rm(d)}] $S$ is isomorphic to the external direct product of 
a group $G$ and a non-empty right zero semigroup $E$.
\end{itemize}
\end{theorem}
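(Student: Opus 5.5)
I will prove the cycle (a)$\Rightarrow$(b)$\Rightarrow$(c)$\Rightarrow$(d)$\Rightarrow$(a), using the fact recalled above from \cite[Lemma~1.26]{ClPr61} that every idempotent of a right simple semigroup is a left identity.

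For (a)$\Rightarrow$(b): right simplicity gives $a\cdot S=S$, so some $x$ satisfies $a\cdot x=b$, and left cancellativity makes it unique.

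For (b)$\Rightarrow$(c): taking $b$ arbitrary shows $a\cdot S=S$, so $S$ is right simple. For an idempotent, fix $a$ and let $e$ be the unique solution of $a\cdot e=a$. Then $a\cdot e\cdot e=a\cdot e=a$, so uniqueness of the solution of $a\cdot x=a$ forces $e\cdot e=e$.

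For (c)$\Rightarrow$(d), which I expect to be the main step, let $E$ be the set of idempotents of $S$. It is nonempty, and every $e\in E$ is a left identity, so $e\cdot f=f$ for $e,f\in E$; hence $E$ is a right zero subsemigroup. I will then proceed as follows.
\begin{enumerate}[(i)]
\item Fix $e\in E$ and set $G=S\cdot e$. For $s\in S$ we have $e\cdot(s\cdot e)=s\cdot e$ and $(s\cdot e)\cdot e=s\cdot e$, so $e$ is a two-sided identity of $G$.
\item Inverses in $G$: given $g\in G$, right simplicity gives $y$ with $g\cdot y=e$. Then $g\cdot(y\cdot e)=e\cdot e=e$, so $g$ has a right inverse $y\cdot e$ in $G$. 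A monoid in which every element has a right inverse is a group, so $G$ is a group.
\item Left cancellativity of $S$: if $a\cdot x=a\cdot y$, pick $e'\in E$ with $e'\cdot a=a$ and $z$ with $z\cdot a\in E$. Here $z$ is obtained from $a\cdot S=S$ together with the group structure of $S\cdot e'$; this needs a little care. Since $z\cdot a$ is a left identity, multiplying $a\cdot x=a\cdot y$ on the left by $z$ gives $x=y$.
\item The isomorphism: define $\phi\colon G\times E\to S$ by $(g,f)\mapsto g\cdot f$. It is a homomorphism because
\[
g\cdot f\cdot g'\cdot f'=g\cdot(f\cdot g')\cdot f'=g\cdot g'\cdot f',
\]
using that $f$ is a left identity.
\item Injectivity: if $g\cdot f=g'\cdot f'$, multiply on the right by $e$. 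Since $f\cdot e=e$, this gives $g=g'$. Left cancellation in $S$ then gives $f=f'$.
\item Surjectivity: for $s\in S$, write $s=(s\cdot e)\cdot f$ with $f$ the unique idempotent solving $(s\cdot e)\cdot f=s$. Such an $f$ is found as $f=g^{-1}\cdot s$, where $g=s\cdot e$ and $g^{-1}$ is its inverse in $G$. One checks $f\cdot f=f$ using $f\cdot e=g^{-1}\cdot g=e$, and $g\cdot g^{-1}\cdot s=e\cdot s=s$.
\end{enumerate}

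For (d)$\Rightarrow$(a): in $G\times E$ the product is $(g,f)(g',f')=(gg',f')$. Solving $(g,f)(x,y)=(h,k)$ forces $x=g^{-1}h$ and $y=k$. So every equation $a\cdot x=b$ has a solution, giving right simplicity, and the solution is unique, giving left cancellativity.

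The delicate points are the bookkeeping in steps (iii) and (vi) of (c)$\Rightarrow$(d): that $G$ is a group, that $S$ is left cancellative, and that $\phi$ is a bijection. Everything else is routine.
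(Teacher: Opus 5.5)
The paper gives no proof of this statement; it is quoted from Clifford--Preston. Your cycle (a)$\Rightarrow$(b)$\Rightarrow$(c)$\Rightarrow$(d)$\Rightarrow$(a) is a correct, self-contained version of the standard argument: idempotents are left identities, $S\cdot e$ is a group, and $(g,f)\mapsto g\cdot f$ is an isomorphism $G\times E\to S$. The computations in (i), (ii), (iv)--(vi) check out. For instance, $f\cdot f=f$ in (vi) follows from $f\cdot f=(f\cdot e)\cdot g^{-1}\cdot s=e\cdot g^{-1}\cdot s=f$, using $g^{-1}=e\cdot g^{-1}$.

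The one place where you assert rather than prove is (iii). It is easy to fill without the group $S\cdot e'$. Since $a\cdot S=S$, pick $z$ with $a\cdot z=e'$. Then $(z\cdot a)\cdot(z\cdot a)=z\cdot(a\cdot z)\cdot a=z\cdot e'\cdot a=z\cdot a$, so $z\cdot a\in E$ is a left identity and your cancellation goes through.

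In fact (iii) can be dropped altogether. In (v), once $g=g'$ is known, cancel $g$ inside the group $G$: $f=e\cdot f=g^{-1}\cdot(g\cdot f)=g^{-1}\cdot(g\cdot f')=f'$. Left cancellativity of $S$ is then a consequence of your (d)$\Rightarrow$(a), which you prove anyway.
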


\smallskip

Notice that from
\cref{1.1}(d)
$G$ and $E$ are both isomorphic to homomorphic images and subsemigroups of a right group $S$. Namely, if $\sigma_e$ is the idempotent endomorphism of $S$ defined by setting $\sigma_e(s) = s \cdot e$, for all $s \in S$ and every idempotent $e \in S$, then the image of $\sigma_e$ is the subgroup $S \cdot e$ of $S$ that is isomorphic to $G$.
In the same way, if $\ell_s:S\to S$ is the map defined by $\ell_s(a) = s\cdot a$, for all $a,s\in S$, by \cref{1.1}(b), there is an idempotent endomorphism $\sigma$ of the semigroup $S$ defined by 
$\sigma(s)=\ell_s^{-1}(s)$, for every $s\in S$, whose image is the subsemigroup $E$ of $S$.  
Also, if $\Delta\colon S\to S\times S$ and $\mu\colon S\times S\to S$ are the mappings defined by $\Delta(s) = (s,s)$ and $\mu(s,s')=s\cdot s'$, for all $s,s'\in S$, respectively, then 
$
\mu(\sigma_e\times\sigma)\Delta=\id_S,
$ 
for every $e\in E$. It is in this sense that $S$ is a direct product of $G$ and $E$. Note that, in general,  $\mu(\sigma\times\sigma_e)\Delta$ is different from $\id_S$.

\medskip

For any right group $(S,\,\cdot)$ with a fixed idempotent element $0\in S$, there is a congruence $\sim$ on the semigroup $S$ defined by setting $a\sim b$ if and only if $a\cdot 0=b\cdot 0$, for every $a,b\in S$. Notice that the equivalence $\sim$ is compatible with both the operations $\cdot$ and $\backslash$ on $S$, where $\backslash$ is the inverse operation of $\cdot$. 
Note that the congruence $\sim$ does not depend on the choice of the idempotent $0$.
The equivalence class of any $a\in S$ modulo $\sim$ is $a\cdot E:=\{\,a\cdot e\mid e\in E\,\}$, so that there is a partition $\{\,a\cdot E\mid a\in S\,\}$. A complete irredundant set of representatives of the congruence classes of $S$ modulo $\sim$ is the set $S\cdot 0:=\{\,a\cdot 0\mid a\in S\,\}$, which is a subgroup of $(S,\,\cdot)$ with identity $0$. Moreover, $S\cdot 0$ and $S/{\sim}$ are canonically isomorphic groups.

\medskip

Finally, notice that in the full subcategory $\RGrp$ of the category of semigroups whose objects are all right groups, every semigroup morphism also respects the operation $\backslash$, that is, for all right groups $S,S'$ and all semigroup morphism $f\colon S\to S'$, one necessarily has that $f(s_1\backslash s_2)=f(s_1)\backslash f(s_2)$, for every $s_1,s_2\in S$.

\smallskip

\subsection{Basics on left \texorpdfstring{$RG$-}{-}semibraces}
 
In this subsection, we introduce left~$RG$-se\-mi\-bra\-ces and investigate some of their structural properties.
\begin{definition}\label[definition]{def1}
    {\rm A {\em left $RG$-semibrace} is a triplet $(B,+,\circ)$, where $B$ is a set and $+,\circ$ are binary operations on $B$, such that both $(B,+)$ and $(B,\circ)$ are right groups, 
and the identity
\begin{equation} 
a\circ (b+ c) = a\circ b+(-a+ a\circ c)\label{bjip}
\end{equation} 
holds, for all $a,b,c\in B$, where $-a+ a\circ c$ is the unique element $x$ of $B$ such that $a+x=a\circ c$.} 
\end{definition}

Hence, $-a+ a\circ c$ is  $ a\backslash (a \circ c)$, where $\backslash$ is the inverse of the operation $+$ of the right group $(B,+)$.

\begin{lemma}\label[lemma]{EsubsetF}
    If $B$ is a left $RG$-semibrace, then every idempotent element in the right group $(B,\circ)$ is also idempotent in the right group $(B,+)$.
\end{lemma}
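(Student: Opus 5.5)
The plan is to show more than idempotency: an idempotent $e$ of $(B,\circ)$ is a left identity of $(B,+)$, and hence $e+e=e$. The argument uses only two facts recalled above. First, every idempotent of a right simple semigroup is a left identity (\cite[Lemma~1.26]{ClPr61}). Second, right groups are left cancellative (\cref{1.1}(a)).

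Let $e\in B$ with $e\circ e=e$. Since $(B,\circ)$ is a right group, it is right simple, so $e\circ x=x$ for all $x\in B$. Now specialise the compatibility identity \eqref{bjip} to $a=e$. For all $b,c\in B$ this gives
\[
e\circ(b+c)=e\circ b+\bigl(-e+e\circ c\bigr),
\]
and since $e$ is a left identity for $\circ$, it becomes
\[
b+c=b+(-e+c).
\]

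Next, use left cancellativity of $(B,+)$ to cancel $b$, which yields $c=-e+c$ for every $c\in B$. By the definition of $-e+c$ as the unique $x$ with $e+x=c$, this says $e+c=c$ for all $c$. So $e$ is a left identity of $(B,+)$, and taking $c=e$ gives $e+e=e$.

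There is no serious obstacle. The only point needing care is that the symbol $-e+c$ is read as $e\backslash c$, the left-division element, and not as a sum involving an inverse. With that reading, the equality $c=e\backslash c$ translates directly into $e+c=c$.
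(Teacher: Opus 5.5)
Your proof is correct and shares the paper's key step: substitute the $\circ$-idempotent for $a$ in the compatibility identity and use that it is a left identity for $\circ$. The paper then takes both $b$ and $c$ equal to a fixed additive idempotent, which only shows that the $\circ$-idempotent fixes that one element under left addition, so it must use the decomposition of $(B,+)$ into its group part and $E$ to finish; by keeping $c$ arbitrary and cancelling on the left, you get the left-identity property for every $c$ at once, which is a bit shorter.
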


\begin{proof}
    Let $e$ be any fixed idempotent in $(B,+)$ and $f$  be any idempotent in $(B,\circ)$. Then $f\circ (e+ e) = f\circ e+(-f+ f\circ e)$ implies that $e+e=e+(-f+ e)$ because $f$ is a left identity in $(B,\circ)$. Now $e$ is a left identity for $(B,+)$, so $e=-f+e$. Therefore,  $f+e=e$. From the decomposition $B=(B+e)+E$, where $B+e$ is a group with respect to the operation $+$ and $E$ is the set of all idempotent elements in $(B,+)$, we can write any element $b\in B$ in a unique way as $b = g_b + e_b$,
where $g_b=b+e\in B+e$ and $e_b\in E$. Here $e_b=b\backslash b$, where $\backslash$ is the inverse operation of the operation $+$ of the right group $(B,+)$. For $b=f$, it follows that $f=(f+e)+e_f=e+e_f=e_f\in E$. This concludes the proof of the lemma.
\end{proof}

\smallskip

In what follows, we will denote by $0$ a fixed idempotent in $(B,\circ)$, hence necessarily idempotent in $(B,+)$ as well.

\bigskip

From \cref{1.1}(d) we have that $B = G + E$ where $G=B+0$ is a subgroup of $(B, +)$ with identity $0$ and $E$ is the set of all idempotents of $(B,+)$. Accordingly, we can write any element $b\in B$ in a unique way as
$$
b = g_b + e_b
$$
with $g_b\in G$ and $e_b\in E$. Here $g_b=b+0$, and $e_b$ is the unique element of $B$ such that $b+e_b=b$. Notice that the opposite $-g_b$ of $g_b$ in the additive group $B+0$ is the unique element $x\in B$ such that $b+x=0$. We have the two projections $B\to G$ and $B\to E$ defined by $b\mapsto g_b=b+0$ and $b\mapsto e_b$, respectively. These projections are semigroup homomorphisms. 
Similarly, we also have $B=H \circ F$ where $H=B \circ 0$ is a subgroup of $(B, \circ)$ with identity $0$ and $F$ is the set of all idempotents of $(B,\circ)$. Thus, any element $b \in B$ can be also written in a unique way as
$$
b = h_b \circ f_b
$$
with $h_b\in H$ and $f_b\in F$. We have that $h_b=b\circ 0$, and $f_b$ is the unique element of $B$ such that $b\circ f_b=b$. We also have the projections $B\to H$ and $B\to F$ defined by $b\mapsto h_b$ and $b\mapsto f_b$, respectively. Hereinafter, in the group $H=B\circ0$, we will denote by $h_b^-$ the inverse of $h_b$. Thus $h_b^-$ is the unique element $y\in B$ such that $b\circ y=0$. 

Also, in any left $RG$-semibrace $B$, we know that $F\subseteq E$ by Lemma \ref{EsubsetF}. As a consequence, if $(B, +)$ is a group, then $(B, +, \circ)$ is a skew left brace.

\smallskip

In these notations, the elements $-a+a\circ c$ and $-g_a + a\circ c$ coincide, for all $a, c\in B$. In fact, $a+(-g_a + a\circ c) = g_a + e_a+( - g_a + a\circ c)$, and because all elements of $E$ are left identities in $(B,+)$, it follows that $e_a +( - g_a + a\circ c)= - g_a + a\circ c$. Therefore $a+(-g_a + a\circ c) = g_a + (- g_a + a\circ c) = a\circ c$. This proves that 
$$
-a+a\circ c=-g_a + a\circ c,
$$ 
for every $a,c\in B$.

\medskip

Consequently, \cref{def1} can be equivalently stated in a different way, as follows.  

\begin{definition}\label[definition]{def2}
{\rm A {\em left $RG$-semibrace} is a triplet $(B,+,\circ)$, where $B$ is a set and $+,\circ$ are binary operations on $B$, such that both $(B,+)$ and $(B,\circ)$ are right groups, there exists an element $0\in B$ that is idempotent
both in $(B,+)$ and $(B,\circ)$, and
\begin{equation}\label{bjip1}
a\circ (b + c) = a\circ b -g_a + a\circ c,
\end{equation} 
for all $a,b,c\in B$.}
\end{definition}

According to \cite[p. 167]{CaCoSt17} and equation (2) in \cite[p. 134]{CaCeSt22}, any left (cancellative) semibrace is a left $RG$-semibrace for which $F\subsetneq E$. In particular, all skew left braces are left $RG$-semibraces. 

Any right group $(B,\cdot)$ determines a \emph{trivial} left $RG$-semibrace if we consider the triplet $(B, \cdot, \cdot)$.

\smallskip

\begin{remark}\label{remtruss} {\rm Let $B$ be a left $RG$-semibrace. 
The assignment $a\mapsto g_a$ can also be seen as an idempotent endomorphism $\sigma\colon B\to B$ of the additive right group $(B,+)$. In this notation, our condition (\ref{bjip1}) in Definition \ref{def2} becomes equivalent to $a\circ (b+ c) = a\circ b -\sigma(a) + a\circ c$, for all $a, b, c \in B$. Thus, our definition of left $RG$-semibrace is strictly related to the definition of \emph{skew left truss} \cite[Section~2]{Brze} since, in this context, the three elements $-a+a\circ c$, $-g_a + a\circ c$ and $-\sigma(a) + a\circ c$ coincide, for all $a, c\in B$.}
\end{remark}

\medskip

As usual in brace theory, we now introduce the lambda map associated with a left $RG$-semibrace $B$, through which identity \eqref{bjip1} can also be rewritten.

\begin{proposition}\label[proposition]{lambda1}
    Let $B$ be a left $RG$-semibrace. Define a map $\lambda_a:B \to B$ by setting 
    $$
    \lambda_a(b) = -g_a + a \circ b,
    $$ 
    for all $a,b\in B$.
    Then the following hold:
    \begin{itemize}
        \item[\rm (a)] 
        $\lambda_a \in \Aut(B,+)$ with inverse given by $\lambda_a^{-1} = \lambda_{h_a^-}$, for every $a\in B$.
           \item[\rm (b)] $\lambda_a(b) = a \circ (h_a^- + b)$, for all $a, b \in B$.
       \item[{\rm(c)}] the map $\lambda:B\to \Aut(B, +)$, $a\mapsto \lambda_a$, is a semigroup homomorphism from $(B, \circ)$ into $\Aut(B, +)$.
     \item[{\rm(d)}] $\lambda_a = \lambda_{h_a}$, for all $a \in B$. 
    \end{itemize}\end{proposition}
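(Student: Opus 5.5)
Throughout, I use Definition~\ref{def2}, i.e.\ the identity $a\circ(b+c)=a\circ b-g_a+a\circ c$, together with the structure facts already recorded: $B=G+E=H\circ F$, $F\subseteq E$ by Lemma~\ref{EsubsetF}, and elements of $E$ (resp.\ $F$) are left identities for $+$ (resp.\ $\circ$). The order I will follow is additivity first, then (b), then (c) and (d), and finally bijectivity in (a), since the inverse formula is easiest once the homomorphism property is available.

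\emph{Additivity.} This is immediate from \eqref{bjip1}:
\[
\lambda_a(b+c)=-g_a+a\circ b-g_a+a\circ c=\lambda_a(b)+\lambda_a(c).
\]

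\emph{Part (b).} Apply \eqref{bjip1} with $b=h_a^-$:
\[
a\circ(h_a^-+c)=a\circ h_a^- -g_a+a\circ c = 0-g_a+a\circ c.
\]
Here $a\circ h_a^-=0$ by definition of $h_a^-$. Since $0\in E$ is a left identity for $+$, the right-hand side equals $-g_a+a\circ c=\lambda_a(c)$.

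\emph{Part (c).} I want $\lambda_{a\circ b}=\lambda_a\lambda_b$. Using (b) and then \eqref{bjip1},
\[
\lambda_a\lambda_b(c)=-g_a+a\circ(-g_b+b\circ c)=-g_a+a\circ(-g_b)-g_a+(a\circ b)\circ c .
\]
I will compare this with $-g_{a\circ b}+(a\circ b)\circ c$, which reduces to showing
\[
-g_a+a\circ(-g_b)-g_a=-g_{a\circ b}.
\]
\begin{itemize}
\item First, \eqref{bjip1} with $b=-g_b$ and $c=g_b$ gives $a\circ(-g_b+g_b)=a\circ(-g_b)-g_a+a\circ g_b$.
\item The left side is $a\circ 0$, since $-g_b+g_b=0$ in the group $G$.
\item Next, write $a\circ b=a\circ(g_b+e_b)=a\circ g_b-g_a+a\circ e_b$.
\item Also $a\circ 0=a\circ(0+0)=a\circ 0-g_a+a\circ 0$, so $-g_a+a\circ 0$ is an additive idempotent, i.e.\ $\lambda_a(0)\in E$. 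The same argument gives $\lambda_a(e)\in E$ for every $e\in E$.
\item Passing to $G$ through the homomorphism $x\mapsto x+0$, everything becomes a computation in the group $G$. The $E$-components vanish there, and the target identity becomes $g_{a\circ g_b}=g_{a\circ 0}-g_{a\circ(-g_b)}\cdots$.
\item One can then also use that $-x+y$ depends only on $g_x$.
\end{itemize}
This bookkeeping is the main obstacle. I expect it to close by working modulo the congruence $\sim$, combined with the observation that both sides are elements of $G$ determined by their classes.

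\emph{Part (d).} By (c), $\lambda_a=\lambda_{h_a\circ f_a}=\lambda_{h_a}\lambda_{f_a}$. So it suffices to show that $\lambda_f=\id$ for $f\in F$. Since $f\in F\subseteq E$, we get $g_f=f+0=0$, and $f\circ b=b$. Hence
\[
\lambda_f(b)=-0+b=b,
\]
because $0$ is a left identity for $+$.

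\emph{Part (a), bijectivity.} By (c) and (d),
\[
\lambda_a\lambda_{h_a^-}=\lambda_{a\circ h_a^-}=\lambda_0=\id .
\]
For the other composition, $h_a^-\circ a$ lies in $F$, since it is an idempotent: in the group $H$ we have $h_a^-\circ h_a=0$, and $h_a^-\circ a=h_a^-\circ h_a\circ f_a=0\circ f_a=f_a$. Therefore $\lambda_{h_a^-}\lambda_a=\lambda_{f_a}=\id$. So $\lambda_a$ is a bijective endomorphism with inverse $\lambda_{h_a^-}$, which proves (a).
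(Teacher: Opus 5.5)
Your additivity argument and your proof of (b) are correct and match the paper. Your deductions of (d) and of the inverse formula in (a) from (c) are also fine; they use only $\lambda_f=\id_B$ for $f\in F$, $a\circ h_a^-=0$ and $h_a^-\circ a=f_a$. The gap is (c) itself. You reduce it to $-g_a+a\circ(-g_b)-g_a=-g_{a\circ b}$, list some ingredients, and stop at ``I expect it to close''. In your ordering both (d) and the bijectivity in (a) are derived from (c), so the proposal as written proves only additivity and (b). The paper does not make (a) depend on (c): it checks directly that $\tau_a(b)=h_a^-\circ(a+b)$ is a two-sided inverse of $\lambda_a$, and then rewrites $\tau_a$ as $\lambda_{h_a^-}$.

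\emph{Closing your route.} The missing fact is $g_{a\circ e}=g_a$ for every $e\in E$. This holds because $a\circ e=a+\lambda_a(e)$ and $\lambda_a(e)\in E$ is a left identity, so $a\circ e+0=a+0$. Now take $G$-components via the homomorphism $x\mapsto x+0$:
\begin{itemize}
\item from $a\circ b=a\circ g_b-g_a+a\circ e_b$ you get $g_{a\circ b}=g_{a\circ g_b}$;
\item from $a\circ 0=a\circ(-g_b)-g_a+a\circ g_b$ you get $g_a=g_{a\circ(-g_b)}-g_a+g_{a\circ g_b}$.
\end{itemize}
Together these give $-g_{a\circ b}=-g_a+g_{a\circ(-g_b)}-g_a$. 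This equals $-g_a+a\circ(-g_b)-g_a$, because $e_{a\circ(-g_b)}$ is absorbed as a left identity. Keep the order of the terms, since $G$ need not be abelian.

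\emph{The paper's shorter route.} It uses your part (b) applied to $a\circ b$. Since $h_{a\circ b}=a\circ b\circ 0=h_a\circ h_b$, we have $h^-_{a\circ b}=h_b^-\circ h_a^-$. Using also $b\circ h_b^-=0$ and $0\circ h_a^-=h_a^-$,
\[
\begin{aligned}
\lambda_{a\circ b}(c)&=a\circ\bigl(b\circ(h_b^-\circ h_a^-+c)\bigr)=a\circ\bigl(b\circ h_b^-\circ h_a^- -g_b+b\circ c\bigr)\\
&=a\circ\bigl(h_a^-+\lambda_b(c)\bigr)=\lambda_a\lambda_b(c).
\end{aligned}
\]
This avoids the $G$/$E$ bookkeeping entirely.
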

    \begin{proof}
        Let $a, b, c \in B$ and first note that
        \begin{align*}
            \lambda_a(b+c) &= -g_a + a \circ (b+c)
            = -g_a + a \circ b -g_a + a \circ c
            = \lambda_a(b) + \lambda_a(c) \,.
        \end{align*}
        Next, for all $a, b \in B$ define $\tau_a(b) = h_a^- \circ (a+b)$. 
        Then, the following hold:
        \begin{align*}
            \lambda_a\tau_a(b) &= -g_a + a \circ \tau_a(b) 
            = -g_a + h_a \circ h_a^- \circ (a+b)
            = b \,, \\
            \tau_a\lambda_a(b) &=
            h_a^- \circ (a + \lambda_a(b))
            = h_a^- \circ (g_a - g_a + a \circ b)
            = b,
        \end{align*}
        for all $a, b \in B$. Moreover, if $a, b \in B$ then we have that
        \begin{align*}
            \lambda_a^{-1}(b) =
            h_a^- \circ (a+b) 
            = h_a^- \circ a -g_{h_a^-} + h_a^- \circ b = -g_{h_a^-} + h_a^- \circ b 
            = \lambda_{h_a^-}(b).
        \end{align*} This concludes the proof of {\rm (a)}.
         Now, let $a, b \in B$ and note that 
        \begin{align*}
            a \circ \left(h_a^-+b\right) 
            &= h_a\circ f_a \circ h_a^--g_a+a \circ b 
            =-g_a+a \circ b, 
        \end{align*}
        thus proving {\rm (b)}. Moreover, we have that
    \begin{align*}
        \lambda_{a\circ b}(c)
        &= a\circ b\circ(h_b^-\circ h_a^- +c)
        = a\circ\left(b\circ h_b^-\circ h_a^- -g_b + b\circ c\right) \\
        &= a\circ\left(h_a^- +\lambda_b(c)\right) = \lambda_a\lambda_b(c),
    \end{align*} 
thus proving {\rm (c)}.
Now, since $\lambda_f = \id_B$, for all $f \in F$, we get
 \begin{align*}
     \lambda_a = \lambda_{h_a \circ f_a} = \lambda_{h_a}\lambda_{f_a} = \lambda_{h_a},
 \end{align*}
 for all $a \in B$, concluding the proof. 
\end{proof}

\smallskip

Notice that $\lambda_a(b)$ is $ a\backslash (a \circ b)$, where $\backslash$ is the inverse of the operation $+$ of the right group $(B,+)$.

\smallskip

\begin{remark}\label{1e2} {\rm Let $\Grp$ denote the category of groups and $\SGrp$ the category of semigroups. 
If $B$ is a left $RG$-semibrace, it is not difficult to show that the group $\Aut_\SGrp(B,+)$ is isomorphic to the direct product $\Aut_\Grp(B+0,+) \times \Sym_E$, where $\Sym_E$ denotes the group of all permutations of the set $E$. 
To this aim, it is sufficient to consider the group homomorphism $$\mathcal{F}\colon\Aut_\SGrp(B,+)\to \Aut_\Grp(B+0,+) \times \Sym_E$$ that associates to every $f\in \Aut_\SGrp(B,+)$ the pair $\mathcal{F}(f)=(f_1,f_2)$, where $f_1\colon B+0\to B+0$ is defined by $f_1(x)=f(x)+0$, for every $x\in B+0$, and $f_2\colon E\to E$ is the restriction of $f$ to $E$. 
The inverse of $\mathcal{F}$ is the map $\mathcal{G}\colon\Aut_\Grp(B+0,+) \times \Sym_E\to \Aut_\SGrp(B,+)$ that associates to each pair $(g,h)\in \Aut_\Grp(B+0,+) \times \Sym_E$ the mapping 
\begin{align*}
    \mathcal{G}(g,h): B\to B \,,\,\, b \mapsto g(b+0)+h(e_b) \,,
\end{align*}
for all $b\in B$. \\
Now, if we put together the following three facts:
\begin{itemize}
    \item[{\rm(1)}] $\lambda\colon (B, \circ)\to \Aut_\SGrp(B, +)$ is a semigroup homomorphism;
    \item[{\rm(2)}] $(B, \circ)=(B\circ 0)\circ F$;
    \item[\rm{(3)}]$\Aut_\SGrp(B,+)\cong\Aut_\Grp(B+0,+) \times \Sym_E$; 
    \end{itemize}
    then we get that $\lambda$ essentially corresponds to the follwing two group morphisms 
    \begin{align*}
        (B\circ 0, \circ) &\to \Aut_\Grp(B+0,+) \,,\,\, b\in B+0\mapsto (\lambda_b)_1 \\
        (B\circ 0, \circ) &\to \Sym_E \,,\,\, b\in B+0\mapsto (\lambda_b)_2 \,.
    \end{align*}
    Hence, $\lambda$ corresponds to an action of the group $(B\circ 0, \circ)$ on the group $(B+0,+)$ and an action of the group $(B\circ 0, \circ)$ on the set $E$.}
\end{remark}

\smallskip

The following shows how the $\lambda$ maps allow us to move from the operation $+$ to the operation $\circ$ and conversely.

\begin{lemma}\label[lemma]{somma_cerchietto}
Let $B$ be a left $RG$-semibrace. Then, for all $a, b \in B$:
    \begin{enumerate}
      \item[\rm (a)] $a \circ b=a+\lambda_a(b)$.
       \item [\rm (b)] $a+b=a \circ \lambda_{h_a^-}(b)$.
    \end{enumerate}
    \begin{proof}
       Let $a, b \in B$. Then we have that $a+\lambda_a(b)=a-g_a+a \circ b=a \circ b.$ Moreover, 
       $$
       a \circ \lambda_{h_a^-}(b) = a \circ h_a^-\circ \left(g_a+b\right)=g_a+b=a+b.
       $$
    \end{proof}
\end{lemma}

\smallskip

\begin{proposition}\label[proposition]{prop_GHEF}
    Let $B$ be a left $RG$-semibrace.
    Then the following hold:
    \begin{itemize}
        \item[{\rm(a)}] $g_a = g_{h_a}$ \ and \ $e_a= \lambda_{a}(f_a)$, for all $a \in B$. In particular, $\lambda_a(0)=e_{h_a}$, for all $a \in B$.
        \item[{\rm(b)}] $\lambda_e\left(H\right) \subseteq H$, for all $e \in E$.
    \end{itemize}
    \begin{proof}
    Let $a \in B$.  By \cref{somma_cerchietto}(a) and \cref{lambda1}(d), we have that
    \begin{align*}
        g_a + e_a = a
        = h_a\circ f_a
        = h_a + \lambda_{h_a}(f_a) 
        = g_{h_a} + \lambda_{a}(f_a). 
    \end{align*}
    Now, $\lambda_{a} \in \Aut(B,+)$ and $\lambda_{a}(f_a)\in E$  by \cref{EsubsetF}. The conclusion follows immediately, thus obtaining the first statement in point {\rm(a)}. 
    In addition,
  $$
\lambda_a(0)=-g_a+g_{h_a}+e_{h_a}=e_{h_a},
  $$ 
  for all $a \in B$.
    Moreover, if we let $e\in E$ and $h\in H$, then 
    by \cref{somma_cerchietto} we deduce that
    \begin{align*}
    \lambda_e(h)=e+\lambda_e(h)=e \circ h=e \circ h \circ 0=\lambda_e(h)\circ 0,
    \end{align*}
    hence $\lambda_e(h)\in H$.
    \end{proof}
\end{proposition}

\smallskip

In the following, if $X$ and $Y$ are subsets of a left RG-semibrace $B$, we denote by $X + Y$ the subsemigroup of $(B, +)$ generated by elements of the form $x+y$, with $x\in X$ and $y\in Y$. 
\begin{proposition}
Let $(B,+,\circ)$ be a left RG-semibrace. Then the following statements hold:
    \begin{enumerate}
        \item[{\rm(a)}] $(E,+,\circ)$ is a left RG-semibrace.
        \item[{\rm(b)}] 
        $(G + H, +, \circ)$ is  a sub-structure of \ $B$ \ and \ $B = (G + H) +  E$.
        \item[{\rm(c)}] $B = H + E= H\circ E$.
        \item[{\rm(d)}] $(G\cap H, \circ)$ is a subgroup of the group $(H,\circ)$.
    \end{enumerate}
\end{proposition}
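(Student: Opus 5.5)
The plan is to work throughout with the decomposition $b=g_b+e_b$ and with the facts that $\lambda_a\in\Aut(B,+)$ (\cref{lambda1}) and that $\lambda_a(0)=e_{h_a}$ (\cref{prop_GHEF}). Since $\lambda_a$ is an automorphism of $(B,+)$, it maps idempotents to idempotents and sends $G=B+0$ onto $B+\lambda_a(0)$. It therefore respects the decomposition: $e_{\lambda_a(b)}=\lambda_a(e_b)$. Combined with \cref{somma_cerchietto}(a), this gives the key formula
\[
e_{a\circ b}=e_{a+\lambda_a(b)}=\lambda_a(e_b),
\]
which controls the $E$-component of every $\circ$-product.

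For (a), $(E,+)$ is a right zero semigroup, hence a right group. For $e,f\in E$ we have $e\circ f=e+\lambda_e(f)=\lambda_e(f)\in E$, since elements of $E$ are left identities for $+$. So $E$ is closed under $\circ$. Left cancellation is inherited from $B$. For right simplicity, given $e,f\in E$ I take $x=\lambda_e^{-1}(f)=\lambda_{h_e^-}(f)\in E$, which gives $e\circ x=\lambda_e(x)=f$. The identity \eqref{bjip} restricts directly, because $-a+a\circ c=\lambda_a(c)\in E$.

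For (c), write $b=g_b+e_b$. Applying \cref{prop_GHEF}(a) to $a=g_b$ gives $g_{h_{g_b}}=g_{g_b}=g_b$. Hence $h_{g_b}+e_b$ has $G$-component $g_b$ and $E$-component $e_b$, so it equals $b$, and $B=H+E$. Also $B=H\circ F\subseteq H\circ E$ by \cref{EsubsetF}.

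For (d), $0\in G\cap H$. Let $a,b\in G\cap H$. Then $a\circ b=a\circ b\circ 0\in H$, and by the key formula $e_{a\circ b}=\lambda_a(0)=e_{h_a}=e_a=0$, so $a\circ b\in G$. For inverses, $h_a^-\in H$ and $e_{h_a^-}=\lambda_{h_a^-}(0)=\lambda_a^{-1}(0)$. This equals $0$ because $\lambda_a(0)=e_a=0$.

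The main obstacle is (b). Set $E_H=\{e_h\mid h\in H\}=\{\lambda_h(0)\mid h\in H\}$. Since $h=g_h+e_h$ and $G$ is a group, I claim
\[
G+H=G+E_H=\{\,g+e\mid g\in G,\ e\in E_H\,\},
\]
which is a right group under $+$, isomorphic to $G\times E_H$. Closure under $\circ$ and under left $\circ$-division then reduces to showing $\lambda_x(E_H)\subseteq E_H$ for every $x$. This follows from \cref{lambda1}(c): $\lambda_x(e_h)=\lambda_x\lambda_h(0)=\lambda_{x\circ h}(0)=e_{h_{x\circ h}}\in E_H$. Then $e_{x\circ y}=\lambda_x(e_y)\in E_H$ gives closure. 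For left division, the solution $y$ of $x\circ y=z$ satisfies $e_y=\lambda_{h_x^-}(e_z)\in E_H$ by the same argument, so $(G+H,\circ)$ is a right group, and \eqref{bjip1} holds by restriction. Finally, $B=G+E\subseteq(G+H)+E\subseteq B$ gives $B=(G+H)+E$.
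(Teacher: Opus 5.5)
Your proof is correct, but it is organized differently from the paper's.

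\textbf{The paper's route.} The paper argues each part directly from the compatibility identity $a\circ(b+c)=a\circ b-g_a+a\circ c$:
\begin{itemize}
\item for (a) it expands $e_1\circ(e_2+e_2)$ to see that $e_1\circ e_2$ is $+$-idempotent;
\item for (b) it expands $b\circ(g+h)=g_b+g_{\lambda_b(g)}-g_b+h_b\circ h$;
\item for (c) it writes $b=h_b+e_b$;
\item for (d) it expands $x\circ(y+0)$ and uses $x^-+0=x^-\circ\lambda_x(0)$.
\end{itemize}

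\textbf{Your route.} You send everything through the single formula $e_{a\circ b}=\lambda_a(e_b)$. This turns every membership question into tracking $E$-components. You combine it with the explicit description
\[
G+H=\{\,g+e\mid g\in G,\ e\in E_H\,\},
\]
where $E_H=\{\lambda_h(0)\mid h\in H\}$ is just the orbit of $0$ under the action of $(H,\circ)$ on $E$ noted in the remark on $\Aut(B,+)$.

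\textbf{What each buys.} Your approach is more complete and more structural. The paper only checks that $E$ and $G+H$ are closed under both operations, and leaves implicit that $(E,\circ)$ and $(G+H,\circ)$ are right groups. You verify unique left division inside the substructures explicitly. You also obtain $(G+H,+)\cong G\times E_H$ and the left-ideal property $B\circ(G+H)\subseteq G+H$; the latter is also implicit in the paper's computation, since its $b$ is arbitrary. The paper's approach is shorter and stays closer to the defining identity.

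Part (c) is essentially the same in both: you use $h_{g_b}$ and the paper uses $h_b$, each relying on $g_a=g_{h_a}$.

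\textbf{A minor point.} Your justification of $e_{\lambda_a(b)}=\lambda_a(e_b)$ via $B+\lambda_a(0)$ is valid but roundabout. It follows at once from $\lambda_a(b)+\lambda_a(e_b)=\lambda_a(b)$ together with uniqueness. You also use $e_{a+c}=e_c$ without stating it.
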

\begin{proof} (a) It is enough to observe that
        \begin{align*}
        e_1\circ e_2 +  e_1\circ e_2
        = e_1\circ e_2 - 0 +  e_1\circ e_2
            = e_1\circ(e_2 + e_2)
            =  e_1\circ e_2,
        \end{align*}
        for all $e_1,e_2\in E$.
        
        (b) Observe that $G+H$ is closed with respect to $+$ by \cref{prop_GHEF}(a). To prove that $G+H$ is closed with respect to $\circ$ it is enough to observe that, by \cref{somma_cerchietto}(a) and again \cref{prop_GHEF}(a),
        \begin{align*}
            b\circ(g+h)
            &= b\circ g - g_{b} 
            + b\circ h\\
            &= 
            g_b+g^{}_{\lambda_{b}(g)}-g_{b} + h_b\circ h
            \ \in \ G + H\,,
        \end{align*}
        for all $b\in B$, $g\in G$, and $h\in H$.
        
(c) Let $b\in B$. Then, by \cref{somma_cerchietto}(a), \cref{lambda1}(d), and \cref{prop_GHEF}(a), we have that
        \begin{align*}
            b=h_b \circ f_b=h_b+\lambda_{h_b}(f_b)=h_b+\lambda_b(f_b)=h_b+e_b \in H + E.
        \end{align*}
        The second part follows directly from \cref{EsubsetF}.
        
 (d)    Let $x,y\in G\cap H$. Then
\begin{align*}
    x\circ y + 0 
    &= x\circ y - x + x\circ 0 &\mbox{since $x\in G\cap H$}\\
    &= x\circ(y+0)
    = x\circ y&\mbox{since $y\in G$}
\end{align*}
Now, by \cref{prop_GHEF}(a), $\lambda_x(0)=0$, thus,  
by \cref{somma_cerchietto}(b),
\begin{align*}
    x^- + 0 = x^-\circ\lambda_x(0) = x^-\circ 0 = x^-,
\end{align*}
since $x^- \in H$. 
Hence, the claim follows.
\end{proof}

\smallskip

The following result allows one to identify a skew left subbrace structure for a specific class of $RG$-semibraces.

\begin{proposition}\label[proposition]{prop_lambda_g_0}
    Let $(B,+,\circ)$ be a left $RG$-semibrace. Then, the following hold:
    \begin{itemize}
        \item[{\rm (a)}] $ G \cap H = \{ g \in G \mid \lambda_g(0) = 0 \}$.
        \item[{\rm (b)}] $(G,+,\circ)$ is a skew left brace if and only if $\lambda_g(0) = 0 $ holds, for all $g \in G$.
    \end{itemize}
\end{proposition}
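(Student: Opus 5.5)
For (a), I will use \cref{prop_GHEF}(a), which gives $\lambda_a(0)=e_{h_a}$ for every $a\in B$. Let $g\in G\cap H$. Then $h_g=g$, so $\lambda_g(0)=e_{h_g}=e_g$. Since $g\in G=B+0$, we have $g=g+0$, hence $e_g=0$, and therefore $\lambda_g(0)=0$.

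Conversely, let $g\in G$ with $\lambda_g(0)=0$. By \cref{somma_cerchietto}(a),
\[
g\circ 0=g+\lambda_g(0)=g+0=g,
\]
so $g\in B\circ 0=H$. This shows $G\cap H=\{\,g\in G\mid \lambda_g(0)=0\,\}$.

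For (b), assume first that $\lambda_g(0)=0$ for all $g\in G$. By (a), $G\subseteq H$. I then want $G$ to be closed under $\circ$ and $(G,\circ)$ to be a group; the natural guess is $G=H$.

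To obtain $H\subseteq G$, take $h\in H$ and write $h=g_h+e_h$. By \cref{prop_GHEF}(a), $g_h=g_{h_h}$, and $e_h=\lambda_h(f_h)=\lambda_h(0)$ because $f_h=0$. Here $\lambda_h(0)=e_{h_h}=e_h$, which is tautological, so this route does not yet close the gap. I will use the group structure of $(H,\circ)$ instead. By the previous proposition, part (d), $G\cap H=G$ is a subgroup of $(H,\circ)$. So $(G,\circ)$ is already a group with identity $0$, and $(G,+)$ is a group with identity $0$.

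With both structures groups, \eqref{bjip1} restricted to $G$ becomes
\[
a\circ(b+c)=a\circ b-a+a\circ c,
\]
since $g_a=a$ for $a\in G$. This is exactly the skew brace axiom, so $(G,+,\circ)$ is a skew left brace.

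Conversely, suppose $(G,+,\circ)$ is a skew left brace, with additive identity $0$. In a skew brace the identities of the two operations coincide, so $0$ is the identity of $(G,\circ)$. Hence for $g\in G$ we get $g\circ 0=g$, and
\[
\lambda_g(0)=-g+g\circ 0=-g+g=0.
\]
The claimed equivalence follows.

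The main obstacle is making sure that the closedness of $G$ under $\circ$ really comes from part (d) of the previous proposition via $G=G\cap H$, and, in the converse, that the skew brace identity is the identity $0$ of $B$. For the latter, I will argue that the $\circ$-identity $u$ of $G$ satisfies $u\circ u=u$, so $u$ is idempotent in $(B,\circ)$, hence in $F\subseteq E$. Together with $u\in G$, this gives $u=u+0=0$.
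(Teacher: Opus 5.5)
Your proof is correct, but part (b) takes a different route from the paper's.

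In (a) the difference is cosmetic. The paper gets both inclusions from $g+0=g=g\circ 0=g+\lambda_g(0)$ and left cancellation in $(B,+)$. You instead read off $\lambda_g(0)=e_{h_g}=e_g=0$ from Proposition~\ref{prop_GHEF}(a).

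In (b), the paper reduces the claim to ``$G$ is closed under $\circ$ if and only if $\lambda_g(0)=0$ for all $g\in G$''. One direction is the computation $g_1\circ g_2+0=g_1+\lambda_{g_1}(g_2)+\lambda_{g_1}(0)=g_1+\lambda_{g_1}(g_2+0)=g_1\circ g_2$. The other transports $\lambda_{g_1}(g_2)+0=\lambda_{g_1}(g_2)$ back through $\lambda_{g_1}$ and uses uniqueness of the idempotent component of $g_2$.

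You instead combine (a) with part (d) of the preceding proposition. The hypothesis gives $G=G\cap H$, which is already a subgroup of $(H,\circ)$. This gives you the whole group structure of $(G,\circ)$, inverses included, which the paper's ``it suffices to check closure'' leaves implicit.

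For the converse, you use that the two identities of a skew brace coincide, or your argument that $u\circ u=u$ gives $u\in F\subseteq E$ and hence $u=u+0=0$. That relies on the full skew-brace hypothesis. The paper extracts $\lambda_g(0)=0$ from closure of $G$ under $\circ$ alone, which is a slightly stronger statement.

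Finally, drop the abandoned attempt to prove $H\subseteq G$. The equality $G=H$ fails in general: for $(B,+)$ right zero and $(B,\circ)$ a nontrivial group one has $G=\{0\}\subsetneq H=B$. Your proof does not need it.
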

\begin{proof}
Set $T = \{ g \in G \mid \lambda_g(0) = 0 \}$ and let us prove first point {\rm (a)}. Note that if $g \in T$ then $g \circ 0 = g +\lambda_g(0) = g$, hence $g \in G \cap H$.
Conversely, if $g \in G \cap H$ then
\begin{align*}
    g + 0 = g = g \circ 0 = g + \lambda_g(0) \,,
\end{align*}
from which follows that $\lambda_g(0) = 0$, thus proving the first point.
To prove point {\rm (b)}, it is sufficient to verify that $G$ is closed under the $\circ$ operation if and only if $\lambda_g(0) = 0$ holds, for all $g \in G$.
To this aim, first assume that the latter condition holds, let $g_1,g_2\in G$ and note that, by \cref{somma_cerchietto}(a), we have
\begin{align*}
  g_1\circ g_2 + 0
  = g_1 + \lambda_{g_1}(g_2) + 0.
\end{align*}
Hence, if we assume that $\lambda_g(0) = 0$ is identically satisfied, for all $g \in G$, then by \cref{somma_cerchietto}(a)
\begin{align*}
  g_1\circ g_2 + 0
  = g_1 + \lambda_{g_1}(g_2 + 0)
  = g_1 + \lambda_{g_1}(g_2),
\end{align*}
namely, $G$ is closed under operation $\circ$.
Conversely, assume that $G$ is closed under $\circ$ and fix $g_1,g_2 \in G$.
Then $g_1\circ g_2 + 0 = g_1\circ g_2$ from which follows that
\begin{align*}
  g_1 + \lambda_{g_1}(g_2) + 0 = g_1 + \lambda_{g_1}(g_2)
\end{align*}
and, by left cancellativity, $\lambda_{g_1}(g_2) + 0 = \lambda_{g_1}(g_2)$. Hence, by \cref{lambda1},
\begin{align*}
  \lambda_{g_1}(g_2 + \lambda^{-1}_{g_1}(0)) = \lambda_{g_1}(g_2)
\end{align*}
and, by the bijectivity of the map $\lambda_{g_1}$, $g_1+\lambda^{-1}_{g_1}(0) = g_2$. Since $g_1,g_2\in G$ and by \cref{lambda1}, $\lambda^{-1}_{g_1}(0)\in E$, we obtain that $\lambda_{g_1}(0) = 0$.
\end{proof}

\medskip

As recalled in the preliminaries, every right group admits a congruence that is independent of the chosen idempotent. We now conclude this section by establishing some properties of this relation in the setting of left $RG$-semibraces.

\begin{remark} 

{\rm For a left $RG$-semibrace $(B,+,\circ)$ we have:

(1) For the right group  $(B,+)$ a congruence $\sim_+$ defined, for every $a,b\in B$, by $a\sim_+ b$ if $a+ 0=b+0$. The equivalence class of any $a\in B$ modulo $\sim_+$ is 
$$
a+E=\{\,a+e\mid e\in E\,\},
$$ 
so that there is a partition $\{\,a+E\mid a\in B\,\}$. A complete irredundant set of representatives of the congruence classes of $B$ modulo $\sim_+$ is $B+ 0=\{\,a+ 0\mid a\in B\,\}$. The groups $(B+0,+)$ and $(B/{\sim_+},+)$ are isomorphic. 

(2) For the right group  $(B,\circ)$ a congruence $\sim_\circ$ defined, for every $a,b\in B$, by $a\sim _\circ b$ if $a\circ 0=b\circ 0$. The equivalence class of every $a\in B$ modulo $\sim_\circ$ is 
$$
a\circ F=\{\,a\circ f\mid f\in F\,\},
$$ 
so that there is a partition $\{\,a\circ F\mid a\in B\,\}$. A complete irredundant set of representatives of the congruence classes of $B$ modulo $\sim_\circ$ is $B\circ 0=\{\,a\circ 0\mid a\in B\,\}$. The groups $(B\circ 0,\circ)$ and $(B/{\sim_\circ},\circ)$ are isomorphic.} 
\end{remark}

\smallskip

\begin{proposition} Let $(B,+,\circ)$ be a left $RG$-semibrace. Then:
\begin{enumerate}
    \item[\rm (a)] $b\circ F\subseteq b+E$, for every $b\in B$.

\item[\rm (b)] The equivalence $\sim_\circ$ is contained in the equivalence $\sim_+$, so that there is a canonical surjective mapping $B/{\sim_\circ}\to B/{\sim_+}$ defined by 
$$
[b]_{\sim_\circ}=b\circ F\mapsto [b]_{\sim_+} = b+E,
$$ 
for every $b\in B$.

\item[\rm (c)] As far as the complete irredundant set of representatives $B+ 0$ and $B\circ 0$ are concerned, there is a canonical injective mapping $B+ 0\to B\circ 0$,  defined by $b+ 0\mapsto (b+0)\circ 0$, for every $b+0\in B+0$.

\item[\rm (d)] The composite mapping of the injective mapping $B+ 0\to B\circ 0$ in {\rm (c)}, the group isomorphism $B\circ 0\to B/{\sim_\circ}$ and the surjective map $B/{\sim_\circ}\to B/{\sim_+}$ in {\rm (b)}, is the canonical the group isomorphism  $B+ 0\to B/{\sim_+}$.
\end{enumerate}\end{proposition}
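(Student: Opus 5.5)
Everything reduces to one identity: for every $f\in F$ and $b\in B$, the element $b\circ f$ lies in $b+E$. So I will prove (a) first and deduce (b)--(d) from it, using the fact from \cref{1.1} and the subsequent discussion that $b+E$ is exactly the $\sim_+$-class of $b$, i.e.\ $x\in b+E$ iff $x+0=b+0$.

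For (a), fix $b\in B$ and $f\in F$. By \cref{somma_cerchietto}(a), $b\circ f=b+\lambda_b(f)$. Since $f$ is idempotent in $(B,+)$ by \cref{EsubsetF} and $\lambda_b\in\Aut(B,+)$ by \cref{lambda1}(a), $\lambda_b(f)$ is idempotent in $(B,+)$, so it lies in $E$. Hence $b\circ f\in b+E$, proving (a). For (b), suppose $a\sim_\circ b$. Then $a\circ F=b\circ F$, so $a=a\circ f_a\in b\circ F\subseteq b+E$ by (a), and therefore $a\sim_+ b$. Thus $\sim_\circ\subseteq\sim_+$, and the map $b\circ F\mapsto b+E$ is well defined and obviously surjective.

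For (c), the map $b+0\mapsto (b+0)\circ 0$ clearly lands in $B\circ 0$. For injectivity, take $x,y\in B+0$ with $x\circ 0=y\circ 0$. Then $x\sim_\circ y$, so $x\sim_+ y$ by (b), i.e.\ $x+0=y+0$. Since $x,y\in B+0$ already satisfy $x+0=x$ and $y+0=y$, we get $x=y$.

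For (d), start from $x=b+0\in B+0$. The first map sends it to $x\circ 0$; the isomorphism $B\circ 0\to B/{\sim_\circ}$ sends that to $(x\circ 0)\circ F=x\circ F$, since $x\circ 0\sim_\circ x$; the map in (b) then gives $x+E$. This is the image of $x$ under the canonical isomorphism $B+0\to B/{\sim_+}$, which sends $x\mapsto [x]_{\sim_+}=x+E$. Hence the composite is that isomorphism.

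The only step with real content is (a), specifically showing that $\lambda_b(f)$ lies in $E$. This needs both \cref{EsubsetF} (so that $f$ is additively idempotent) and the fact that $\lambda_b$ is an additive automorphism (so that it preserves idempotents). Once that is in place, the rest is bookkeeping with the congruence classes.
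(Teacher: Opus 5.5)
Your proposal is correct and follows essentially the same route as the paper: (a) via $b\circ f=b+\lambda_b(f)$ with $\lambda_b(f)\in E$ (using $F\subseteq E$ and $\lambda_b\in\Aut(B,+)$), then (b) and (c) from the description of the congruence classes as $b\circ F$ and $b+E$. Your explicit check of (d), using $(x\circ 0)\circ F=x\circ F$, supplies the ``elementary calculation'' that the paper leaves to the reader.
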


\begin{proof} (a) If $a\in b\circ F$, then $a=b\circ f$ for some $f\in F$. From \cref{somma_cerchietto}(a) it follows that $a=b+\lambda_b(f)$. Now $\lambda_b$ is an automorphism of $(B,+ )$ and $f\in F\subseteq E$ is an idempotent in $(B,+ )$. Therefore $\lambda_b(f)\in E$. Thus $a=b+\lambda_b(f)\in b+E$.

(b) Follows from (a) and the fact that the equivalence classes modulo $\sim_\circ$ are the sets $b\circ F$ and the equivalence classes modulo $\sim_+$ are the sets $b+E$.

(c) Let $b+0, b'+0$ be two elements of $B+0$ such that $(b+0)\circ 0=(b'+0)\circ 0$. Then $(b+0)\sim_\circ(b'+0)$, so that $(b+0)\sim_+(b'+0)$ by (b). Therefore, we have $b+0+0=b'+0+0$, hence $b+0=b'+0$.

(d) is an elementary calculation.
\end{proof}


\begin{remark}
    Of course, the mappings in {\rm (b)} and {\rm (c)} are not semigroup homomorphisms. 
\end{remark}


\smallskip

\section{Constructions of left \texorpdfstring{$RG$-}{-}semibraces}

\subsection{\texorpdfstring{Left $\mathbf{RG}$-}{-}semibraces from left averaging operators on groups}
In this subsection, we recall the notion of averaging operators on groups, as recently introduced in \cite{HuhuXing25}, and use it to provide a class of examples of left $RG$-semibraces. We underline that the study of averaging operators was initiated by Reynolds \cite{Rey} in connection with turbulence theory and experienced an algebraic renaissance in the work of Aguiar \cite{Aguiar00}.

\medskip

    A \emph{left averaging operator} on a group $B$ is a map ${\varphi}:B \to B$ such that the identity
    \begin{align*}
        {\varphi}(a){\varphi}(b) = {\varphi}\left( {\varphi}(a) b \right) 
    \end{align*}
    holds, for all $a,b \in B$.

\smallskip

Averaging operators on groups are a useful resource to construct right groups, see \cite[Proposition 3.8]{HuhuXing25}.
\begin{lemma}\label[lemma]{lemma:dashv}
    Let ${\varphi}:B \to B$ be a left averaging operator on a group $B$ and define a new operation $\vdash$ on $B$ by setting
    \begin{align*}
        a \vdash b := {\varphi}(a)b \,,
    \end{align*}
    for all $a,b \in B$.
   Then, $(B,\vdash)$ is a semigroup. Moreover, if there exists $e \in B$ such that $\varphi(e) = 1_B$, then $(B,\vdash)$ is a right group.
\end{lemma}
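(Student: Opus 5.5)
Associativity comes first, by a direct computation with the averaging identity. For $a,b,c\in B$,
\begin{align*}
(a\vdash b)\vdash c = \varphi(\varphi(a)b)\,c = \varphi(a)\varphi(b)\,c,
\end{align*}
using $\varphi(\varphi(a)b)=\varphi(a)\varphi(b)$. On the other side,
\begin{align*}
a\vdash(b\vdash c)=\varphi(a)\bigl(\varphi(b)c\bigr),
\end{align*}
and associativity of the group gives equality. Hence $(B,\vdash)$ is a semigroup.

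For the second claim I would use characterization (b) of \cref{1.1}. I need that for every $a,b\in B$ there is a unique $x\in B$ with $a\vdash x=b$, that is, $\varphi(a)x=b$. Since $B$ is a group, the unique solution is $x=\varphi(a)^{-1}b$. So every left translation of $(B,\vdash)$ is bijective, and it remains only to check $B\neq\emptyset$, which is automatic for a group.

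Condition (b) of \cref{1.1} does not use the element $e$ at all, so this argument seems to make the hypothesis $\varphi(e)=1_B$ unnecessary. The main point to double-check is therefore whether \cref{1.1}(b) applies verbatim to $(B,\vdash)$. I expect it does, since (b) is a purely equational condition on a nonempty semigroup.

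To stay aligned with the hypothesis as stated, I would also note how $e$ enters. Under $\varphi(e)=1_B$, the element $e$ is a left identity, because $e\vdash b=\varphi(e)b=b$. One can then exhibit an idempotent directly, since $e\vdash e=e$, and use characterization (c) of \cref{1.1}. Right simplicity, $a\vdash B=\varphi(a)B=B$, again holds simply because $B$ is a group. I would present the (b)-route as the main proof and the (c)-route as the remark explaining the role of $e$.
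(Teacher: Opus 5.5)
Your proof is correct and complete. The averaging identity gives $(a\vdash b)\vdash c=\varphi(\varphi(a)b)\,c=\varphi(a)\varphi(b)\,c=a\vdash(b\vdash c)$. Moreover, $x=\varphi(a)^{-1}b$ is the unique solution of $a\vdash x=b$, so \cref{1.1}(b) applies verbatim to the nonempty semigroup $(B,\vdash)$. The paper gives no argument of its own for this lemma; it defers to Zhang--Gao, Proposition~3.8. So there is no separate route to compare against, and your direct verification supplies what is omitted. The (c)-route you sketch also works, since $a\vdash B=\varphi(a)B=B$ and $e\vdash e=\varphi(e)e=e$.

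Your suspicion that the hypothesis $\varphi(e)=1_B$ is superfluous is right. You can confirm it directly, not only through the abstract implication (b)$\Rightarrow$(c) of \cref{1.1}. For every $a\in B$, apply the averaging identity with second argument $\varphi(a)^{-1}a$:
\[
\varphi(a)\,\varphi\bigl(\varphi(a)^{-1}a\bigr)=\varphi\bigl(\varphi(a)\varphi(a)^{-1}a\bigr)=\varphi(a),
\]
hence $\varphi\bigl(\varphi(a)^{-1}a\bigr)=1_B$.

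So $\ker(\varphi)$ is never empty for a left averaging operator on a group. This set is precisely the set of idempotents of $(B,\vdash)$, since $x\vdash x=x$ if and only if $\varphi(x)=1_B$. The element $\varphi(a)^{-1}a$ is exactly the idempotent $x$ with $a\vdash x=a$ whose existence (b) predicts.

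In particular, the assumption $\ker(\psi)\neq\varnothing$ in the subsequent theorem, which builds left $RG$-semibraces from two averaging operators, is automatically satisfied as well.
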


Let ${\varphi}:B \to B$ be a left averaging operator on a group $B$ and consider the operation $\vdash$ defined in \cref{lemma:dashv}.
If we set $\ker({\varphi}) := \{a \in B \mid {\varphi}(a) = 1_B \}$, then it is straightforward to verify that $\ker({\varphi}) = E(B,\vdash)$.
Accordingly, if we assume that $\ker(\varphi)$ is non-empty and fix $e \in \ker({\varphi})$ then each element $a \in B$ can be uniquely written as
\begin{align*}
    a = \left({\varphi}(a)e \right) \vdash \left( {\varphi}(a)^{-1}a \right) \,,
\end{align*}
where ${\varphi}(a)e$ and ${\varphi}(a)^{-1}a$ are the groupal and idempotent components of $a$, respectively, in relation to the decomposition of $(B,\vdash)$ induced by the fixed idempotent~$e$.

\begin{theorem}\label[theorem]{prop:avg_$RG$-semibrace}
    Let $B$ be a group, consider two left averaging operators ${\varphi},{\psi}:B \to B$ on $B$ and assume that $\ker(\psi) \neq \varnothing$.
    If $+$ and $\circ$ are the operations induced by ${\varphi}$ and ${\psi}$, respectively, then the structure $(B,+,\circ)$ is a left $RG$-semibrace if and only if
    \begin{align}\label{eq:averaging-RGsemibrace}
        {\psi}(a){\varphi}(b) = {\varphi}\left( {\psi}(a)b \right) {\varphi}(a)^{-1}{\psi}(a) 
    \end{align}
    holds, for all $a,b \in B$.
    \begin{proof}
       Assume first that \eqref{eq:averaging-RGsemibrace} holds and note that if $a \in \ker(\psi)$ then
        \begin{align*}
            \varphi(a) =
            \psi(a)\varphi(a) =
            \varphi\left( \psi(a)a \right) \varphi(a)^{-1}\psi(a) =
            \varphi(a)\varphi(a)^{-1}\psi(a) = 1_B \,.
        \end{align*}
        It follows that $\ker(\varphi) \neq \varnothing$ so that the pairs $(B,+)$ and $(B,\circ)$ are right groups.
        Now, let us note that if $a,b \in B$ then $-g_a+b$ coincides with the unique $x \in B$ such that $a+x = b$, from which follows that $-g_a+b = {\varphi}(a)^{-1}b$.
        With this observation, 
        it is straightforward to verify that \eqref{eq:averaging-RGsemibrace} implies \eqref{bjip1}.
        Conversely, if we assume that $(B,+,\circ)$ is a left $RG$-semibrace, then an analogous reasoning shows that \eqref{eq:averaging-RGsemibrace} is equivalent to \eqref{bjip1}, proving the assertion.
    \end{proof}
\end{theorem}

\smallskip

\begin{Ex}\label[example]{ex:endomorphism}
   Let $B$ be a group, ${\varphi}$ and ${\psi}$ two idempotent endomorphisms of $B$ such that ${\varphi}{\psi} = {\varphi}$ and $\Imm({\varphi})\subseteq Z(B)$. Then, by \cref{prop:avg_$RG$-semibrace}, the structure $(B,+,\circ)$ is a left $RG$-semibrace, with operations $+$ and $\circ$ defined by
    $
    a+b:= {\varphi}(a)b$
    and 
    $a\circ b:= {\psi}(a)b,
    $
    for all $a,b\in B$.\\
    A concrete non-trivial example is given by the Klein group $B:= \{1,\,x,\,y,\,xy\}$ and ${\varphi},{\psi}:B\to B$ the endomorphisms of $B$ obtained by setting ${\varphi}(x) = 1$, ${\varphi}(y) = y$,  and ${\psi}(x) = 1$, ${\psi}(y)= xy$,   respectively. Note that, in this case, 
    {the operations\  $+$\ and\ $\circ$\ do not coincide} since $y + x = xy$ and $y\circ x= y$. Note also that $G = \Imm({\varphi}) = \{1, y\}$, $H= \Imm({\psi}) = \{1,xy\}$, and $E = \ker({\varphi}) = \ker({\psi}) = F$.
\end{Ex}

\smallskip

\begin{remark}\label[remark]{rem:HG}
{\rm In the previous example, neither $G$ nor $H$ forms a skew sub-brace of $B$. Indeed, $G$ is not closed under the operation $\circ$ since $y\circ y = x\notin G$. 
    Moreover, $H$ is not closed under the operation $+$ since $xy + xy = x\notin H$. }
\end{remark}

\smallskip

\begin{Ex}\label[Ex]{ex:averaging}
    Let $B = \{ 1,\,x,\,y,\,z,\,xy,\,xz,\,yz,\,xyz \}$ be the group of order $8$ with exponent~$2$ and define two endomorphisms $\varphi,{\psi}:B \to B$ by setting
    \begin{align*}
        \begin{cases}
            \ {\varphi}(x) = 1 = {\varphi}(y) \\[0.2cm]
            \ {\varphi}(z) = z
        \end{cases}
        \quad\text{and}\qquad
        \begin{cases}
            \ {\psi}(x) = y = {\psi}(y) \\[0.2cm]
            \ {\psi}(z) = xyz \,
        \end{cases} 
    \end{align*}
    Then, ${\varphi}$, ${\psi}$ are idempotent endomorphisms such that $\varphi\psi = {\varphi}$, $\Imm({\varphi}) \subseteq Z(B)$ and $\ker({\varphi}) = \left\langle x,y \right\rangle \neq \left\langle xy \right\rangle = \ker({\psi})$.
    Therefore, following \cref{ex:endomorphism}, if $(B,+,\circ)$ is the left $RG$-semibrace whose operations are defined by setting
        $a+b := {\varphi}(a)b$ and
        $a\circ b := {\psi}(a)b$,
    for all $a,b \in B$, then we have that $F = \ker(\psi) \subsetneqq \ker(\varphi) = E$.
\end{Ex}

A straightforward \texttt{GAP} routine verifies that $8$ is the least positive integer witnessing an example such as the one above, i.e., where $F \subsetneq E$.

\smallskip

\begin{Ex}
    Let $B := \left\langle x,y \mid x^4 = y^2 = 1,\, xy = yx \right\rangle$ be the group of order $8$ isomorphic to $C_4 \times C_2$ and consider the functions $\varphi,\psi:B \to B$ defined by setting
    \begin{align*}
        \begin{cases}
        \ \varphi(1) = \varphi(x) = \varphi(x^2y) = \varphi(x^3y) = 1 \\[0.2cm]
        \ \varphi(y) = \varphi(x^2) = \varphi(xy) = \varphi(x^3) = x^2
        \end{cases}
        \,\text{and}\quad
        \begin{cases}
        \ \psi(1) = \psi(x) = 1 \\[0.05cm]
        \ \psi(x^2) = \psi(x^3) = x^2 \\[0.05cm]
        \ \psi(y) = \psi(xy) = y \\[0.05cm]
        \ \psi(x^2y) = \psi(x^3y) = x^2y 
        \end{cases}.
    \end{align*}
    Then $\varphi$ and $\psi$ are left averaging operators (that are not endomorphisms of $B$) satisfying the hypothesis of \cref{prop:avg_$RG$-semibrace} and therefore determine a left $RG$-semibrace structure $(B,+,\circ)$ whose operations are defined by setting $a + b = \varphi(a)b$ and $a \circ b = \psi(a)b$, for all $a,b \in B$.
    Setting $0$ equal to the identity element of the group $B$, it is straightforward to verify that $G = B + 0 = \Imm(\varphi) = \{0, x^2\}$, while $H = B \circ 0 = \Imm(\psi) = \{0,x^2,y,x^2y\}$ so that $G \cap H = G$.
    Thanks to \cref{prop_lambda_g_0}, we deduce that $(G,+,\circ)$ is a trivial skew left brace.
\end{Ex}

A quick \texttt{GAP} routine verifies that all left $RG$-semibraces of size $8$ arising from left averaging operators as in \cref{prop:avg_$RG$-semibrace} and admitting a decomposition $B = G + E = H \circ F$ such that $G$ is closed under $\circ$ (or equivalently such that $G \subseteq H$) all satisfy the identity $|G| = 2$, independently from the chosen multiplicative idempotent.

\medskip

\subsection{Matched product of left \texorpdfstring{$\mathbf{RG}$}-{-}semibraces}

This section aims to extend the construction of the matched product of left semibraces, introduced in \cite{CCSt20-1}, to the broader class of left $RG$-semibraces. As a particular case, we also focus on the semidirect product of left $RG$-semibraces.

\begin{lemma}\label[lemma]{lem-semidirect-rightgroup}
Let $(B, \cdot), (C, \cdot')$ be two right groups.  For every $u\in C$, let $\sigma_u:B\to B, a\mapsto\sigma_u(a)$ be a semigroup homomorphism. Suppose that~$\sigma\colon (C,\cdot')\to\Aut(B,\cdot),\, u \mapsto \sigma_u$ is a semigroup homomorphism from $(C, \cdot')$ into $\Aut(B, \cdot)$. Then $B\times C$ endowed with the following operation
\begin{align*}
(a,u) (b,v) := \left(a\cdot\sigma_u(b), \, u\cdot'v \right),
\end{align*}
for all $(a,u), (b,v)\in B\times C$, is a right group. \end{lemma}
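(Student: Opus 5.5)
We verify associativity directly and then check condition \cref{1.1}(b): for any two elements $(a,u),(c,w)$ there is a unique $(x,v)$ with $(a,u)(x,v)=(c,w)$. Each $\sigma_u$ is assumed to be a semigroup automorphism of $(B,\cdot)$. Since $\sigma$ is a semigroup homomorphism, we have $\sigma_{u\cdot' v}=\sigma_u\sigma_v$ for all $u,v\in C$.

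\emph{Associativity.} For $(a,u),(b,v),(c,w)\in B\times C$ we compute
\begin{align*}
\bigl((a,u)(b,v)\bigr)(c,w) &= \bigl(a\cdot\sigma_u(b)\cdot\sigma_{u\cdot' v}(c),\ u\cdot' v\cdot' w\bigr),\\
(a,u)\bigl((b,v)(c,w)\bigr) &= \bigl(a\cdot\sigma_u(b\cdot\sigma_v(c)),\ u\cdot' v\cdot' w\bigr).
\end{align*}
The second components agree by associativity of $(C,\cdot')$. The first components agree because $\sigma_u$ is multiplicative, so $\sigma_u(b\cdot\sigma_v(c))=\sigma_u(b)\cdot\sigma_u\sigma_v(c)$, and $\sigma_u\sigma_v=\sigma_{u\cdot' v}$. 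We then conclude using associativity of $(B,\cdot)$.

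\emph{Unique solvability.} We need $(a\cdot\sigma_u(x),\,u\cdot' v)=(c,w)$. Since $(C,\cdot')$ is a right group, \cref{1.1}(b) gives a unique $v\in C$ with $u\cdot' v=w$. Since $(B,\cdot)$ is a right group, \cref{1.1}(b) gives a unique $y\in B$ with $a\cdot y=c$. Because $\sigma_u$ is bijective, there is a unique $x=\sigma_u^{-1}(y)$ with $\sigma_u(x)=y$. Hence the pair $(x,v)$ exists and is unique. Note that $B\times C$ is non-empty since $B$ and $C$ are. By \cref{1.1}(b) we conclude that $B\times C$ is a right group.

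The only step needing care is associativity. There it is essential that $\sigma$ be a homomorphism, not merely a map into $\Aut(B,\cdot)$. Bijectivity of each $\sigma_u$ is used only in the solvability step.
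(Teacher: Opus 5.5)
Your proposal is correct and follows essentially the same route as the paper: you verify associativity explicitly using $\sigma_{u\cdot' v}=\sigma_u\sigma_v$, which the paper leaves as routine. You then obtain unique solvability via \cref{1.1}(b) by solving $u\cdot' v=w$ in $C$ and $a\cdot y=c$ in $B$, and taking $x=\sigma_u^{-1}(y)$, exactly as in the paper's argument.
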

\begin{proof}
It is a routine computation to verify that it is a semigroup. Hence, using  \cref{1.1}(b), it is sufficient to note that, if $(a,u),(b,v)\in B\times C$ and $x\in B$ and $y\in C$ are the unique elements such that $a\cdot x = b$ and $u\cdot' y = v$, set $t:= \sigma^{-1}_u(x)$, we obtain that $(t, y)$ is the unique element in $B\times C$ such that $(a,u)(t,y) = (b,v)$.
\end{proof}

Assuming that $B=G_B \cdot E_B$ and $C=G_C \cdot' E_C$ are the decompositions of the two right groups $(B, \cdot)$ and $(C, \cdot')$, respectively, as in \cref{1.1}(d), it follows from the definition that $\sigma_e=\id_B$, for all $e\in E_C$. As a consequence, we have that the idempotent part is given by $E_{B \times C}=E_B \times E_C$.  
Moreover, the groupal part $G_{B \times C}$ is computed in the following result.

\begin{lemma}\label[lemma]{lem:groupal_semidir}
    If $B \rtimes_\sigma C$ is the semidirect product of two right groups $(B,\cdot)$ and $(C,\cdot')$ via $\sigma$, then $G_{B \rtimes C} = G_B \rtimes_\sigma G_C$.
    In particular, $g_{(a,u)} = (g_a, g_u)$, for all $(a,u) \in B \times C$.
\end{lemma}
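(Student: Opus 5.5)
To compute $G_{B\rtimes C}$ I would fix the idempotent $(0_B,0_C)$, where $0_B\in E_B$ and $0_C\in E_C$ are the chosen idempotents. By the remark preceding the statement, $E_{B\times C}=E_B\times E_C$, so $(0_B,0_C)$ is an idempotent of $B\rtimes_\sigma C$. As recalled after \cref{1.1}, the groupal part attached to an idempotent $0$ is the image $S\cdot 0$ of the idempotent endomorphism $\sigma_0\colon s\mapsto s\cdot 0$. Hence $G_{B\rtimes C}=\{(a,u)(0_B,0_C)\mid (a,u)\in B\times C\}$, and the groupal component of $(a,u)$ is $g_{(a,u)}=(a,u)(0_B,0_C)$.

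Next I compute this product directly:
\[
(a,u)(0_B,0_C)=\bigl(a\cdot\sigma_u(0_B),\,u\cdot' 0_C\bigr)=\bigl(a\cdot\sigma_u(0_B),\,g_u\bigr).
\]
The main point is to show that $a\cdot\sigma_u(0_B)=a\cdot 0_B=g_a$. Since $\sigma_u\in\Aut(B,\cdot)$, it maps idempotents to idempotents, so $\sigma_u(0_B)\in E_B$. In the right group $B\cong G_B\times E_B$, the product $a\cdot e$ with $e\in E_B$ depends only on the $\sim$-class data of $a$ and equals $a\cdot 0_B$. Indeed, $a\cdot e$ and $a\cdot 0_B$ are both $\sim$-equivalent to $a$, and $a\cdot e\cdot 0_B=a\cdot 0_B$ because $e$ is a left identity. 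So $a\cdot\sigma_u(0_B)\sim a\cdot 0_B$. Both elements then lie in the same class $a\cdot E_B$, but only $a\cdot 0_B$ is guaranteed to lie in $B\cdot 0_B$, so this alone does not finish the argument.

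Here I must be careful, and I expect this to be the main obstacle. In general $a\cdot e\neq a\cdot 0_B$ (in $G\times E$ one has $(g,f)(1,e)=(g,e)$). So the first coordinate is $g_a\cdot\sigma_u(0_B)$ rather than $g_a$, and the literal equality $g_{(a,u)}=(g_a,g_u)$ requires $\sigma_u(0_B)=0_B$. I would therefore either read the statement up to the canonical identification of groupal parts via the congruence $\sim$ (which is independent of the idempotent, as noted in the preliminaries), so that $(a\cdot\sigma_u(0_B),g_u)$ represents the class of $(g_a,g_u)$, or verify that $\sigma_u$ fixes $0_B$ in the intended setting.

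To obtain the semidirect product structure, I would check that the map $G_B\rtimes_\sigma G_C\to G_{B\rtimes C}$ induced through $B/{\sim}\times C/{\sim}$ is a bijective homomorphism. Multiplicativity is immediate from the defining formula, because $\sigma_u$ respects $\sim$: it is an automorphism, and $\sim$ is intrinsic. Bijectivity follows from uniqueness of the decompositions $a=g_a\cdot e_a$ and $u=g_u\cdot' e_u$, together with $\sigma_e=\id_B$ for all $e\in E_C$, which shows that the action of $C$ factors through $G_C$. This identifies $G_{B\rtimes C}$ with $G_B\rtimes_\sigma G_C$ and yields $g_{(a,u)}=(g_a,g_u)$.
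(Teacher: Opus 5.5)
Your computation $g_{(a,u)}=(a,u)(0_B,0_C)=\bigl(g_a\cdot\sigma_u(0_B),\,g_u\bigr)$ is exactly the paper's first step, and your suspicion about the next step is justified. By left cancellativity, $g_a\cdot\sigma_u(0_B)=g_a$ holds if and only if $\sigma_u(0_B)=0_B$. Nothing forces this, because $\Aut(B,\cdot)\cong\Aut(G_B)\times\Sym_{E_B}$ lets $\sigma_u$ permute idempotents. So the statement as written is false.

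A minimal counterexample: take $B=\{e_1,e_2\}$ the right zero semigroup, $C=(\mathbb{Z}/2\mathbb{Z},+)$, and $\sigma_1$ the transposition of $e_1,e_2$. Then $(a,u)(b,v)=(\sigma_u(b),u+v)$, and the idempotents are $(e_1,0)$ and $(e_2,0)$. With $0_B=e_1$ and $0_C=0$ one gets
\[
g_{(e_1,1)}=(\sigma_1(e_1),1)=(e_2,1)\neq(e_1,1)=(g_{e_1},g_1).
\]
Also $G_{B\rtimes C}=\{(e_1,0),(e_2,1)\}$, whereas $G_B\times G_C=\{(e_1,0),(e_1,1)\}$ is not even closed, since $(e_1,1)(e_1,1)=(e_2,0)$. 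The choice $0_B=e_2$ fails symmetrically.

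The paper tries to obtain $\sigma^{-1}_{g_u}(0_B)=0_B$ by multiplying $g_{(a,u)}$ on the left by its inverse, asserting that the product is $(\sigma^{-1}_{g_u}(0_B),0_C)$. In fact the first coordinate is $\sigma^{-1}_{g_u}(0_B)\cdot\sigma^{-1}_{g_u}\bigl(\sigma_{g_u}(0_B)\bigr)=\sigma^{-1}_{g_u}(0_B)\cdot 0_B=0_B$, so that identity is vacuous. Hence your second option, verifying that $\sigma_u$ fixes $0_B$, cannot be carried out. You should not hedge: give the counterexample and prove a corrected statement. There are two choices.

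\emph{First}, add the hypothesis $\sigma_u(0_B)=0_B$ for all $u\in C$. This is equivalent to $\sigma_u(G_B)=G_B$, because $\sigma_u(B\cdot 0_B)=B\cdot\sigma_u(0_B)$ and the group $B\cdot e$ has $e$ as its only idempotent. It is exactly what is needed for $G_B\rtimes_\sigma G_C$ to make sense as a subsemigroup, and under it your computation gives the literal equality at once.

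\emph{Second}, prove the intrinsic version $G_{B\rtimes C}\cong(B/{\sim})\rtimes(C/{\sim})$, which your last paragraph sketches correctly. One checks that $(a,u)\sim(b,v)$ iff $a\sim b$ and $u\sim v$, using $\sigma_u=\sigma_{g_u}$. Two caveats apply: $G_B$ must carry the induced action $g\mapsto\sigma_u(g)\cdot 0_B$ rather than $\sigma_u$ itself. Also, your closing claim $g_{(a,u)}=(g_a,g_u)$ then only means that the first coordinate of $g_{(a,u)}$ is $\sim$-equivalent to $g_a$, not equal to it.

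Finally, where the lemma is invoked, in \cref{semidirect}, it is applied to $\hat{+}$. That is the untwisted direct product, where $g_{(a,u)}=(a+0_B,u+'0_C)=(g_a,g_u)$ holds trivially, so nothing downstream is affected.
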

\begin{proof}
    The first part of the statement is obvious. Besides, we have that
    $$
    g_{(a,u)} 
    = (a,u)(0_B,0_C) 
    = (g_a \sigma_{g_u}(0_B),g_u),
    $$
    for all $(a,u) \in B \times C$. Note that the inverse of $g_{(a,u)}$ is
    \begin{align*}
        \left(\sigma_{g_u}^{-1}(g_a^{-}), g_u^{-} \right)
        =
        \left( ((\sigma^{-1}_{g_u}(g_a^{-})\cdot 0_B)\cdot\sigma_{g^{-1}_u}(0_B),\, g^{-1}_u \right) \in G_{B \rtimes_\sigma C} \,.
    \end{align*}
    Since
    \begin{align*}
        (0_B,0_C) =
        \left( ((\sigma^{-1}_{g_u}(g_a^{-})\cdot 0_B)\cdot\sigma_{g^{-1}_u}(0_B), g^{-1}_u \right)(g_a\cdot\sigma_{g_u}(0_B), g_u)
        = (\sigma^{-1}_{g_u}(0_B),0_C)
    \end{align*}
    we get that $\sigma^{-1}_{g_u}(0_B)  = 0_B$. 
    Therefore, the claim follows.
\end{proof}

\smallskip

\begin{theorem}\label{semidirect}
   Let $(B, +, \circ)$ and $(C, +', \circ')$ be two left $RG$-semibraces. Let $\sigma: (C, \circ') \to \Aut(B, +, \circ), \, u \mapsto \sigma_u$ be a semigroup homomorphism.
   Then $B\times C$ endowed with the following operations
\begin{align*}
&(a, u) \, \hat{+} (b, v) := (a+b, u+v),\\
&(a,u) \, \hat{\circ}\, (b,v) := \left(a\circ\sigma_u(b), \, u\circ'v \right),
\end{align*}
for all $(a,u), (b,v)\in B\times C$, is a left $RG$-semibrace which we call \emph{semidirect product of $B$ and $C$ via $\sigma$
} and denote it by $B \rtimes_\sigma C$.
\end{theorem}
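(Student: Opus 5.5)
Three things have to be checked: that $(B\times C,\hat{+})$ and $(B\times C,\hat{\circ})$ are right groups, and that the identity \eqref{bjip} holds in the form given in \cref{def1}.

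\textbf{Additive structure.} $(B\times C,\hat{+})$ is the direct product of two right groups. For $(a,u),(b,v)$, the equation $(a,u)\hat{+}(x,y)=(b,v)$ has the unique solution $(x,y)=(-a+b,\,-u+'v)$. By \cref{1.1}(b) it is therefore a right group, and the additive inverse operation is computed componentwise:
$$-(a,u)\,\hat{+}\,(c,w)=(-a+c,\,-u+'w).$$

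\textbf{Multiplicative structure.} Each $\sigma_u$ is an automorphism of $(B,\circ)$, and $\sigma\colon(C,\circ')\to\Aut(B,\circ)$ is a semigroup homomorphism. Hence \cref{lem-semidirect-rightgroup} applies directly and shows that $(B\times C,\hat{\circ})$ is a right group.

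\textbf{Compatibility identity.} Fix $(a,u),(b,v),(c,w)$. The left side of \eqref{bjip} is
$$(a,u)\,\hat{\circ}\,(b+c,\,v+'w)=\bigl(a\circ\sigma_u(b+c),\;u\circ'(v+'w)\bigr).$$
The right side is
$$\bigl(a\circ\sigma_u(b)+(-a+a\circ\sigma_u(c)),\;u\circ' v+'(-u+'u\circ'w)\bigr).$$
The second components agree because $C$ is a left $RG$-semibrace. For the first components, $\sigma_u$ respects $+$ (it lies in $\Aut(B,+,\circ)$), so $\sigma_u(b+c)=\sigma_u(b)+\sigma_u(c)$. Applying \eqref{bjip} in $B$ to the triple $a,\sigma_u(b),\sigma_u(c)$ then gives exactly the required equality.

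\textbf{Main subtlety.} The only care needed is in reading $\Aut(B,+,\circ)$ as the automorphisms preserving both operations. Additivity of $\sigma_u$ is what the compatibility step uses; multiplicativity is what the right-group step uses. One might also worry whether $-(a,u)$ should be taken relative to the $\hat{+}$ structure. Since \cref{def1} phrases the identity via the unique solution $x$ of $(a,u)\hat{+}x=(a,u)\hat{\circ}(c,w)$, and that solution is the componentwise one found above, there is no issue. No choice of a common idempotent is required.
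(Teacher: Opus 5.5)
Your proof is correct and follows essentially the same route as the paper. Both get the two right-group structures from the componentwise direct product and from \cref{lem-semidirect-rightgroup}, and both check the compatibility identity componentwise, using additivity of $\sigma_u$ together with the identity in $B$ and in $C$. The only difference is cosmetic: you use the left-division form of \cref{def1} (the componentwise solution of $(a,u)\,\hat{+}\,x=(a,u)\,\hat{\circ}\,(c,w)$), whereas the paper uses \cref{def2} with $g_{(a,u)}=(g_a,g_u)$ taken from \cref{lem:groupal_semidir}. Your version therefore does not need to exhibit a common idempotent.
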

\begin{proof}
    Clearly, $(B\times C, \, \hat{+})$ and $(B\times C, \, \hat{\circ})$ are right groups by the previous lemma. 
    Moreover, since by \cref{lem:groupal_semidir}, $g_{(a,u)} = (g_a, g_u)$, for all $(a,u)\in B\times C$, we obtain
    \begin{align*}
        (a,u)\hat{\circ}\left((b,v)\hat{+}(c,w)\right)
        &= (a\circ\sigma_u(b+c),\, u\circ'(v+w))\\
        &= (a\circ\sigma_u(b) - g_a + a\circ\sigma_u(c)),\,
        u\circ' v -' g_u +' u \circ' w))\\
        &= (a\circ\sigma_u(b),u\circ' v) \, \hat{-} \, (g_a, g_u) \, \hat{+}   \,(a\circ\sigma_u(c),u\circ' w)\\
        &= (a,u)\,\hat{\circ}\,(b,v) \hat{-} (g_a, g_u) \,
        \hat{+} \, (a,u) \, \hat{\circ} \, (c,w),
    \end{align*}
    for all $(a,u),(b,v),(c,w)\in B\times C$.
    Therefore, the claim follows.
\end{proof}

\smallskip

Thanks to \cref{semidirect}, it is possible to construct left $RG$-semibraces starting from any trivial left $RG$-semibrace and a specific (cancellative) semibrace. 

\begin{corollary}\label[corollary]{corollary:semidirect}
   Let $(B, \circ)$ be a group, 
$\left(C, +'\right)$ a right group,
and consider a semigroup homomorphism $\sigma: \left(C, +'\right) \to \Aut(B, \circ), \, u \mapsto \sigma_u$. Then $B \rtimes_\sigma C$ is a left $RG$-semibrace that is not a (cancellative) semibrace.
\end{corollary}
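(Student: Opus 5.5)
The plan is to realize both factors as left $RG$-semibraces and then invoke \cref{semidirect}. The group $(B,\circ)$ is a right group with trivial idempotent part, so $(B,\circ,\circ)$ is the trivial left $RG$-semibrace on $B$. Likewise $(C,+',+')$ is the trivial left $RG$-semibrace on $C$. In both cases the identity \eqref{bjip1} reduces to associativity: in a trivial structure $(S,\cdot,\cdot)$ we have $\lambda_a=\id$, since $-g_a+a\cdot b=b$.

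Next we check that $\sigma$ is admissible in \cref{semidirect}. The automorphisms of the trivial semibrace $(B,\circ,\circ)$ are exactly the elements of $\Aut(B,\circ)$. The multiplicative operation of the trivial semibrace on $C$ is $+'$, so $\sigma$ is a semigroup homomorphism from $(C,+')$ into $\Aut(B,\circ,\circ)$. \cref{semidirect} then shows that $B\times C$ is a left $RG$-semibrace with
\[
(a,u)\,\hat{+}\,(b,v)=(a\circ b,\,u+'v),\qquad (a,u)\,\hat{\circ}\,(b,v)=(a\circ\sigma_u(b),\,u+'v).
\]

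It remains to show that this is not a left (cancellative) semibrace. The characterization used is the one recalled after \cref{def2}: a left cancellative semibrace is a left $RG$-semibrace in which the multiplicative structure is a group, so that $F$ consists of a single element. We compute $F$ for $B\rtimes_\sigma C$.

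By the remark after \cref{lem-semidirect-rightgroup}, $\sigma_e=\id_B$ for every idempotent $e\in E_C$. Hence $(a,e)$ is idempotent under $\hat\circ$ exactly when $a\circ a=a$, that is, $a=1_B$. Therefore
\[
F=\{1_B\}\times E_C,
\]
and likewise $E=\{1_B\}\times E_C$.

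So $(B\times C,\hat\circ)$ is a group exactly when $|E_C|=1$, i.e.\ when $C$ is a group. The main obstacle is that the statement as written implicitly requires $C$ to be a genuine right group, not a group. If $C$ is a group, the construction is a skew left brace and hence a cancellative semibrace. I would therefore state explicitly the assumption $|E_C|>1$, i.e.\ that $(C,+')$ is not a group, which is the intended case. Under that assumption $F$ has more than one element, $(B\times C,\hat\circ)$ is not a group, and the structure is not a left cancellative semibrace.
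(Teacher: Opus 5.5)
Your argument is correct, and its skeleton matches the paper's: plug the trivial left $RG$-semibrace $(C,+',+')$ into \cref{semidirect}. The difference is the structure you place on $B$. You use the trivial skew brace $(B,\circ,\circ)$. The paper instead uses the left cancellative semibrace $(B,+,\circ)$ with $(B,+)$ right zero, i.e.\ $a+b=b$.

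Both choices satisfy $\Aut(B,+,\circ)=\Aut(B,\circ)$, so \cref{semidirect} applies either way. The multiplicative right group $(a,u)\,\hat{\circ}\,(b,v)=(a\circ\sigma_u(b),\,u+'v)$ is literally the same in the two constructions. Hence your computation $F=\{1_B\}\times E_C$, and the conclusion that $\hat{\circ}$ is not a group when $|E_C|>1$, transfer verbatim to the paper's object.

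What changes is the additive side. The paper gets $(a,u)\,\hat{+}\,(b,v)=(b,\,u+'v)$, so $E=B\times E_C$, $F\subsetneq E$ as soon as $|B|>1$, and $\lambda_{(a,u)}(b,v)=(a\circ\sigma_u(b),\,v)$. In your version $\hat{+}$ is a direct product, $E=F$, and $\lambda_{(a,u)}(b,v)=(\sigma_u(b),\,v)$. So the two readings give different left $RG$-semibraces and, in general, different solutions $r_B$.

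Since the statement never says which addition $B$ carries, your choice is a legitimate reading. It is not, however, the object the paper means: the sentence before the corollary refers to a ``specific (cancellative) semibrace''. Switching your first factor to $(B,\pi_2,\circ)$ costs nothing, since only $E$ changes.

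Your added hypothesis is genuinely needed, and on this point your proof is more complete than the paper's. The paper's one-line argument never checks the clause \emph{not a (cancellative) semibrace}. If $(C,+')$ is a group, that clause fails for either choice of addition on $B$: for $C$ trivial, the paper's construction just returns $(B,\pi_2,\circ)$, which is a left cancellative semibrace. So the corollary should assume $|E_C|>1$, exactly as you propose.
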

\begin{proof}
 The result is a direct application of \cref{semidirect}, by considering the trivial $RG$-semibrace $\left(C,+',+'\right)$ and the (cancellative) semibrace $(B,+,\circ)$ with $(B,+)$ a right zero semigroup, for which the identity $\Aut(B, +, \circ)= \Aut(B, \circ)$ holds.
\end{proof}

\smallskip

By applying \cref{corollary:semidirect} with a proper choice of $\sigma$ we provide a concrete example of a non-trivial $RG$-semibrace that is not a (cancellative) semibrace.
\begin{Ex}
Let $B:= \{1,\,x,\,y,\,xy\}$ be the Klein group. 
Define $a\circ b:= ab$, for all $a,b\in B$, and 
$a+'b:= \varphi(a)b$, for all $a, b \in B$, where 
${\varphi}:B\to B$ is the endomorphism of $B$ obtained by setting ${\varphi}(x) = 1$ and ${\varphi}(y) = y$ (as in \cref{ex:endomorphism}). Let $\sigma:\left(B, +'\right) \to \Aut(B,\circ)$ be the semigroup homomorphism from $(B,+')$ to $\Aut(B,\circ)$ given by $\sigma(x):= \id_B$ and $\sigma(y):= f$, where $f:B\to B$ is the automorphism of $(B,\circ)$ given by $f(x)=y$ and $f(y)=x$. Hence, $B\rtimes_{\sigma} B$ is a left $RG$-semibrace of order $16$.
\end{Ex}

\medskip
  
More generally, we can provide a construction of right groups that extends \cref{lem-semidirect-rightgroup}.
\begin{lemma}\label[lemma]{lemma_rg}
Let $(B,\cdot)$ and $(C,\cdot')$ be two right groups, $\sigma:C\to \Sym_B$ and $\delta:B\to C^{C}$ maps, and set $\sigma_u(a):= \sigma\left(u\right)\left(a\right)$ and $\delta_a(u):= \delta\left(a\right)\left(u\right)$, for all $a\in B$ and $u\in C$. If the following conditions are satisfied
	\begin{align}\label{S1}
	&\sigma_{u}(a \cdot b)=\,\sigma_u(a) \cdot\sigma_{\delta_a(u)}b	
	&\sigma_{u \cdot' v}(a) = \, \sigma_u\sigma_v(a) \tag{Z1}\\
	\label{S2}&\delta_a(u \cdot'v) = \delta_{\sigma_v(a)}(u) \cdot' \delta_a(v) & \delta_{a \cdot b}(u) = \delta_b\delta_a(u) \tag{Z2}
	\end{align}
for all $a,b \in B$ and $u,v \in C$, then $B \times C$ is a right group with respect to the operation defined by
	\begin{equation}\label{prod-semigruppo-match}
	\left(a,u\right) \left(b,v\right) :=  
    \left(a \cdot \sigma_u(b), \delta_b(u) \cdot v\right),
	\end{equation}
	for all $a,b \in B$ and $u,v \in C$.
    \end{lemma}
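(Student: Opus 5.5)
We need to show that the operation \eqref{prod-semigruppo-match} is associative, and then check condition (b) of \cref{1.1}: for every pair $(a,u),(b,v)$ there is exactly one $(t,y)$ with $(a,u)(t,y)=(b,v)$. This mirrors the proof of \cref{lem-semidirect-rightgroup}. The new ingredient is the twisting map $\delta$, which couples the two coordinates.

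\emph{Associativity.} For $(a,u),(b,v),(c,w)\in B\times C$ we compute both bracketings. On one side,
\[
\bigl((a,u)(b,v)\bigr)(c,w)=\bigl(a\cdot\sigma_u(b)\cdot\sigma_{\delta_b(u)\cdot' v}(c),\ \delta_c(\delta_b(u)\cdot' v)\cdot' w\bigr).
\]
On the other side,
\[
(a,u)\bigl((b,v)(c,w)\bigr)=\bigl(a\cdot\sigma_u(b\cdot\sigma_v(c)),\ \delta_{b\cdot\sigma_v(c)}(u)\cdot'\delta_c(v)\cdot' w\bigr).
\]
For the first coordinates, the left identity of \eqref{S1} gives $\sigma_u(b\cdot\sigma_v(c))=\sigma_u(b)\cdot\sigma_{\delta_b(u)}\sigma_v(c)$, and the right identity of \eqref{S1} rewrites this as $\sigma_u(b)\cdot\sigma_{\delta_b(u)\cdot' v}(c)$. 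For the second coordinates, the left identity of \eqref{S2} gives $\delta_c(\delta_b(u)\cdot' v)=\delta_{\sigma_v(c)}\delta_b(u)\cdot'\delta_c(v)$. The right identity of \eqref{S2} then yields $\delta_{\sigma_v(c)}\delta_b(u)=\delta_{b\cdot\sigma_v(c)}(u)$. So the two bracketings coincide, and associativity of $\cdot$ and $\cdot'$ finishes this step.

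\emph{Unique solvability.} Fix $(a,u)$ and $(b,v)$. We need $(t,y)$ with $a\cdot\sigma_u(t)=b$ and $\delta_t(u)\cdot' y=v$. Since $(B,\cdot)$ is a right group, there is a unique $x$ with $a\cdot x=b$. Since $\sigma_u\in\Sym_B$, the first equation forces $t=\sigma_u^{-1}(x)$ and determines $t$ uniquely. Once $t$ is fixed, $\delta_t(u)\in C$ is a fixed element. Since $(C,\cdot')$ is a right group, there is a unique $y$ with $\delta_t(u)\cdot' y=v$. Hence $(t,y)$ exists and is unique.

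The associativity computation is where we expect the only real work, and it is mainly bookkeeping: we must apply the four identities in \eqref{S1} and \eqref{S2} in the right order. In the second coordinate in particular, $\delta_b(u)$ must play the role of the first argument in \eqref{S2}. Unlike in \cref{lem-semidirect-rightgroup}, we do not need $\delta_a$ to be bijective, because $y$ is solved for after $t$ has been fixed.
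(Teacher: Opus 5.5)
Your proposal is correct and follows the paper's proof. The paper calls associativity a routine computation, and you carry it out correctly: both identities of \eqref{S1} handle the first coordinate, and both identities of \eqref{S2} handle the second. You then verify condition (b) of \cref{1.1} exactly as the paper does, taking $t=\sigma_u^{-1}(x)$ where $a\cdot x=b$, and $y$ the unique element with $\delta_t(u)\cdot' y=v$.
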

    \begin{proof}
       It is a routine computation to verify that $B \times C$ with the operation \eqref{prod-semigruppo-match} is a semigroup. Now, let $(a,u),(b,v) \in B \times C$
       and consider $x\in B$ the unique element such that $a\cdot x = b$, obtained using \cref{1.1}(b).
       If we set $t:= \sigma^{-1}_u(x)$ and consider the unique $y\in C$ such that $\delta_t(u) \cdot' y=v$, again through \cref{1.1}(b), then we obtain that $(t, y)$ is the unique element in $B\times C$ such that $(a,u)(t,y) = (b,v)$. 
    \end{proof}

\smallskip

Observe that $B \times C$, endowed with the operation \eqref{prod-semigruppo-match}, is the classical \emph{Zappa--Szép (matched) product} of the two semigroups $(B, \cdot)$ and $(C, \cdot')$, cf. \cite{Ku83}. \\
Following the approach of matched products for left semibraces introduced in \cite{CCSt20-1}, we define appropriate maps $\alpha$ and $\beta$ in order to obtain a new left $RG$-semibrace whose multiplicative semigroup is isomorphic to the Zappa–Szép product of the given right groups.
   
    \smallskip

\begin{definition}
    {\rm Let $(B, +, \circ)$ and $(C, +', \circ')$ be two left $RG$-semibraces. Consider $\alpha:C\to\Aut(B)$ a semigroup homomorphism from the right group $(C,\circ')$ into the automorphism group of the semigroup $(B,+)$, and $\beta:B\to\Aut(C)$ a semigroup homomorphism from the right group $(B,\circ)$ into the automorphism group of the semigroup $(C,+')$. If $\alpha$ and $\beta$ satisfy the following conditions
    \begin{align}\label{eq:lambda-alpha}
    \alpha_u\lambda_a = \lambda_{\alpha_u(a)}\alpha_{\beta^{-1}_{\alpha_u(a)}(u)}
    \qquad\text{and}\qquad
        \beta_a\lambda_u = \lambda_{\beta_a(u)}\beta_{\alpha^{-1}_{\beta_a(u)}(a)},
    \end{align}
    for all $(a,u), (b,v)\in B\times C$, we name the quadruple $(B, C, \alpha, \beta)$ a \emph{matched product system of the left $RG$ semibraces $B$ and $C$ via $\alpha$ and $\beta$}.}
\end{definition}

\smallskip

\begin{theorem}
      Let $(B, +, \circ)$ and $(C, +', \circ')$ be two left $RG$-semibraces and $(B, C, \alpha, \beta)$ a matched product system of the left $RG$ semibraces $B$ and $C$ via $\alpha$ and $\beta$. 
      Then $B\times C$ endowed with the following operations
\begin{align*}
&(a, u) \, \hat{+} (b, v) := (a+b, u+'v)\,,\\
&(a,u) \, \hat{\circ}\, (b,v) :=
\left(\alpha_{u}{\left(\alpha^{-1}_{u}{\left(a\right)}\circ b\right)},\,\beta_{a}\left(\beta^{-1}_{a}\left(u\right)\circ' v \right)\right)\,,
\end{align*}
for all $(a,u), (b,v)\in B\times C$, is a left $RG$-semibrace which we call \emph{matched product of $B$ and $C$ via $\alpha$ and $\beta$} and denote by $B \bowtie_{\alpha,\beta} C$.
\end{theorem}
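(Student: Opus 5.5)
The plan is to verify the three requirements of \cref{def1} for $B\times C$ one at a time:
\begin{itemize}
\item[(i)] $(B\times C,\hat{+})$ is a right group;
\item[(ii)] $(B\times C,\hat{\circ})$ is a right group;
\item[(iii)] the compatibility identity \eqref{bjip1} holds.
\end{itemize}

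Point (i) is immediate. $(B\times C,\hat{+})$ is the direct product of the right groups $(B,+)$ and $(C,+')$, so \cref{1.1}(b) holds componentwise. Its idempotents are the pairs $(e,e')$ with $e\in E_B$, $e'\in E_C$. Fixing $(0_B,0_C)$ gives $g_{(a,u)}=(a,u)\hat{+}(0_B,0_C)=(g_a,g_u)$, and hence $\hat{-}\,g_{(a,u)}=(-g_a,-'g_u)$.

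The first real step, which makes everything else transparent, is to rewrite $\hat{\circ}$ in a simpler form. By \cref{somma_cerchietto}(a) and additivity of $\alpha_u$,
\begin{align*}
\alpha_u\left(\alpha_u^{-1}(a)\circ b\right)=\alpha_u\left(\alpha_u^{-1}(a)+\lambda_{\alpha_u^{-1}(a)}(b)\right)=a+\alpha_u\lambda_{\alpha_u^{-1}(a)}(b).
\end{align*}
Applying the first condition in \eqref{eq:lambda-alpha} with $a$ replaced by $\alpha_u^{-1}(a)$ turns this into $a+\lambda_a\alpha_{\beta_a^{-1}(u)}(b)=a\circ\alpha_{\beta_a^{-1}(u)}(b)$. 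Symmetrically, the second component equals $u\circ'\beta_{\alpha_u^{-1}(a)}(v)$. So
\begin{align*}
(a,u)\,\hat{\circ}\,(b,v)=\left(a\circ\alpha_{\beta_a^{-1}(u)}(b),\ u\circ'\beta_{\alpha_u^{-1}(a)}(v)\right).
\end{align*}
Both twisting automorphisms depend only on the left factor $(a,u)$.

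With this form, (iii) is a direct computation. Set $x=\beta_a^{-1}(u)$ and $y=\alpha_u^{-1}(a)$. Since $\alpha_x\in\Aut(B,+)$ and $\beta_y\in\Aut(C,+')$, and both are determined by $(a,u)$ alone, the first component of $(a,u)\hat{\circ}((b,v)\hat{+}(c,w))$ is
\begin{align*}
a\circ(\alpha_x(b)+\alpha_x(c))=a\circ\alpha_x(b)-g_a+a\circ\alpha_x(c),
\end{align*}
by \eqref{bjip1} in $B$. The second component is analogous, using \eqref{bjip1} in $C$. Reassembling with $\hat{-}\,g_{(a,u)}=(-g_a,-'g_u)$ gives \eqref{bjip1} for $B\times C$.

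For (ii), unique solvability (\cref{1.1}(b)) is easy once $\hat{\circ}$ is in the simplified form. Given $(a,u)$ and $(c,w)$, the automorphisms $\alpha_x$ and $\beta_y$ above are fixed, because they depend only on $(a,u)$. The equation $(a,u)\hat{\circ}(b,v)=(c,w)$ therefore splits into $a\circ\alpha_x(b)=c$ and $u\circ'\beta_y(v)=w$. Each has a unique solution, by \cref{1.1}(b) in $(B,\circ)$ and $(C,\circ')$ and bijectivity of $\alpha_x,\beta_y$.

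The main obstacle is associativity of $\hat{\circ}$. I would prove it on the original defining formula, where it reads like the left semibrace case of \cite{CCSt20-1}. The tools are:
\begin{itemize}
\item $\alpha$ and $\beta$ are semigroup homomorphisms;
\item $\alpha_f=\id$ for idempotents $f\in F_C$ and $\beta_f=\id$ for $f\in F_B$, so the arguments $\beta_a^{-1}(u)$ and $\alpha_u^{-1}(a)$ can be replaced by their $H$-components;
\item both identities in \eqref{eq:lambda-alpha}, used to move the $\alpha$'s and $\beta$'s past the $\lambda$-maps as in the rewriting above.
\end{itemize}
Concretely, I would expand both bracketings of the first component into the shape $a+\lambda_a\alpha(\dots)(b+\lambda_b\alpha(\dots)(c))$ and reduce the comparison to an identity between the indices of the $\alpha$'s. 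That identity should follow from the second condition in \eqref{eq:lambda-alpha} together with $\beta$ being a homomorphism; the second component is handled symmetrically.

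If this bookkeeping gets unwieldy, the fallback is to identify $(B\times C,\hat{\circ})$ with the Zappa--Szép product of \cref{lemma_rg} via a suitable $\sigma,\delta$ built from $\alpha,\beta,\lambda$, and then check \eqref{S1}--\eqref{S2} instead.
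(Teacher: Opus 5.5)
Your proposal is correct. For the simplified form of $\hat{\circ}$ and for the compatibility identity it is exactly the paper's argument. The difference lies in how you show that $(B\times C,\hat{\circ})$ is a right group.

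The paper never computes associativity. It checks that $(a,u)\mapsto(a,\beta_a(u))$ is a multiplicative bijection from the Zappa--Szép product of \cref{lemma_rg}, taken with $\sigma_u=\alpha_u$ and $\delta_a(u)=\beta^{-1}_{\alpha_u(a)}(u)$, onto $(B\times C,\hat{\circ})$. It then transports the right-group structure, leaving the verification of \eqref{S1}--\eqref{S2} for this $\sigma,\delta$ implicit.

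Your route is self-contained. Unique solvability is immediate, because in the simplified form the twisting automorphisms depend only on the left factor. Associativity does reduce, as you predict, to the index identity
\[
\beta^{-1}_{a\circ\alpha_x(b)}\bigl(u\circ'\beta_{\alpha_u^{-1}(a)}(v)\bigr)=\beta^{-1}_{\alpha_x(b)}(x)\circ'\beta^{-1}_b(v),\qquad x=\beta_a^{-1}(u).
\]
To get there you use your first rewriting in the form $\alpha_x(b\circ d)=\alpha_x(b)\circ\alpha_{\beta^{-1}_{\alpha_x(b)}(x)}(d)$, together with the fact that $\alpha$ is a homomorphism.

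This identity holds. Since $u\circ'\beta_{\alpha_u^{-1}(a)}(v)=\beta_a(x\circ' v)$ and $\beta_{a\circ\alpha_x(b)}=\beta_a\beta_{\alpha_x(b)}$, the left side equals $\beta^{-1}_{\alpha_x(b)}(x\circ' v)$. Your second rewriting, with $a$ replaced by $\alpha_x(b)$ and $u$ by $x$, evaluated at $\beta_b^{-1}(v)$, gives $\beta_{\alpha_x(b)}\bigl(\beta^{-1}_{\alpha_x(b)}(x)\circ'\beta^{-1}_b(v)\bigr)=x\circ' v$. This matches the right side, and the second component is symmetric.

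So the paper's route buys a link with classical Zappa--Szép products. Yours avoids checking \eqref{S1}--\eqref{S2} and makes solvability trivial.

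One caution about your fallback: $\hat{\circ}$ is not literally a Zappa--Szép product. The twist $\alpha_{\beta_a^{-1}(u)}$ in the first component depends on $a$ as well as on $u$. You would therefore need the change of coordinates $(a,u)\mapsto(a,\beta_a(u))$ that the paper uses.
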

\begin{proof}
First,  by \eqref{eq:lambda-alpha}, we obtain that 
    \begin{align*}
        \alpha_u(\alpha^{-1}_u(a)\circ b) 
        &= \alpha_u\left(\alpha^{-1}_u(a) + \lambda_{\alpha^{-1}_a(u)}(b)\right) 
        = a + \alpha_u\lambda_{\beta^{-1}_a(u)}(b)\\ 
        &= a + \lambda_a\alpha_{\beta^{-1}_a(u)}(b)
        = a\circ\alpha_{\beta^{-1}_a(u)}(b),
    \end{align*}
     and, similarly,
    $\beta^{}_a\left(\beta^{-1}_a(u)\circ' v\right) 
     = u\circ'\beta^{}_{\alpha^{-1}_u(a)}(v),$
     for all $(a,u),(b,v) \in B \times C$. \\
Now, observe that the map $\varphi:B\times C\to B\bowtie_{\alpha,\beta}C$ defined by $\varphi(a,u)= (a,\beta_a(u)),$ for all $(a,u)\in B\times C$, is an isomorphism from the Zappa--Szép product semigroup $(B\times C, \circ)$ via $\sigma$ and $\delta$, where $\sigma:C\to \Sym_B$ and $\delta:B\to C^C$ are the maps given by
     $$
     \sigma_u:= \alpha_u
     \qquad\text{and}\qquad\delta_a(u):= \beta^{-1}_{\alpha_u(a)}(u),
     $$ 
     for all  $a\in B$ and $u\in C$, to the structure $(B\bowtie_{\alpha,\beta}C, \hat{\circ})$. Indeed, clearly $ \varphi$ is bijective and
     \begin{align*}
         \varphi((a,u)&\circ(b,v))
         = \left (a\circ\alpha_u(b),\, \beta_a\beta_{\alpha_u(b)}\left(\beta^{-1}_{\alpha_u(b)}(u)\circ' v\right) \right)\\
         &\underset{\eqref{eq:lambda-alpha}}{=}(a\circ\alpha_u(b),\, \beta_a(u\circ'\beta_b(v)))
         = (a\circ\alpha_u(b),\, \beta_a(u +' \lambda_u\beta_b(v)))\\
         &=(a\circ\alpha_u(b),\, \beta_a(u) +' \beta_a\lambda_u\beta_b(v))\\
         &\underset{\eqref{eq:lambda-alpha}}{=} \big(a\circ\alpha_u(b),\, \beta_a(u) +' \lambda_{\beta_a(u)}\beta_{\alpha^{-1}_{\beta_a(u)}(a)}\beta_b(v)\big)
         \\
         &= \big(a\circ\alpha_u(b),\, \beta_a(u)\circ'  \beta_{\alpha^{-1}_{\beta_a(u)}(a)}\beta_b(v)\big)= (a,\beta_a(u))\,\hat{\circ}\,(b,\beta_b(v))\\
         &= \varphi(a,u)\, \hat{\circ}\, \varphi(b,v),
     \end{align*}
     for all $(a,u),(b,v)\in B\times C$. Hence, by \cref{lemma_rg}, $\left(B \times C, \hat{\circ}\right)$ is a right group.
Furthermore, for all $(a,u),(b,v),(c,w)\in B\times C$,
     \begin{align*}
         (a, u)&\,\hat{\circ}\, \left((b,v) \,\hat{+}\, (c,w)\right)
         = \left( a\circ\alpha_{\beta^{-1}_a(u)}(b+c),\, 
         u\circ'\beta_{\alpha^{-1}_u(a)}(v+'w) \right)\\
         &= \left( a\circ\alpha_{\beta^{-1}_a(u)}(b) - g_a + a \circ \alpha_{\beta^{-1}_a(u)}(c),\, 
         u\circ'\beta_{\alpha^{-1}_u(a)}(v) -' g_u +' \beta_{\alpha^{-1}_u(a)}(w) \right)\\
         &=(a, u) \,\hat{\circ}\, (b,v) \,\hat{-}\, g_{(a,u)} \,\hat{+}\, (a,u) \,\hat{\circ}\, (c,w).
     \end{align*}
     Therefore, the claim follows.
\end{proof}

\medskip

\section{Solutions associated to left \texorpdfstring{$RG$-}{-}semibraces}

In this section, we prove that any left $RG$-semibrace gives rise to a left non-degenerate solution. 

\smallskip

To prove the main result, we make use of the notion of \emph{twist} of a shelf 
 and the description of left non-degenerate solutions in terms of twists 
 (cf. \cite[Definition 2.14]{DoRySte24}). 
First, recall that a \emph{{\rm(}left{\rm)} shelf} $(X, \triangleright)$ is a set $X$ equipped with a left self-distributive binary operation $\triangleright$ such that
\begin{equation}\label{self_distri}
    x \triangleright (y \triangleright z)= (x \triangleright y) \triangleright (x \triangleright z), 
\end{equation} 
 for all $x,y,z\in X$. 
A \emph{shelf homomorphism} between two shelves is defined as any map that preserves the shelf operations on each underlying set.  
If we denote with $L_x:X \to X, y \mapsto x \triangleright y$ the operator of left multiplication by $x$, for all $x \in X$, then equation \eqref{self_distri} is equivalent to requiring that $L_x$ be a shelf homomorphism. If, in addition, the maps $L_x$ are bijective, $(X, \triangleright)$ is a \emph{(left) rack}. Moreover, we say that a shelf $(X, \triangleright)$ is a \emph{(left) spindle} if $x \triangleright x=x$, for all $x\in X$. A rack $(X, \triangleright)$ that is also a spindle is a \emph{left quandle}. For more details on the topic, see, for instance, \cite{FeSaFa04}. 

   If $(X,\triangleright)$ is a shelf, then a map $\varphi:X \to \Aut(X,\triangleright)$ is called a \emph{twist} of $(X,\triangleright)$ if the following identity 
\begin{align}\label{eq:twist}
    \varphi_a\varphi_b = 
    \varphi_{\varphi_a(b)}\varphi_{\varphi^{-1}_{\varphi_a(b)}\left(\varphi_a(b)\, \triangleright\,a\right)},
\end{align}
holds, for all $a, b \in X$.

\begin{lemma}{\rm \cite[cf. Theorem 2.15]{DoRySte24}}\label[lemma]{thm:twist_solution}
Let $(X,\triangleright)$ be a shelf and $\varphi:X \to\Sym_X$ a map.
    If we define $r:X\times X\to X\times X$ by setting
    \begin{align*}
        r(a,b) =
        \left(\varphi_a(b), \varphi^{-1}_{\varphi_a(b)}\left(\varphi_a(b) \triangleright a\right)\right) \,,
    \end{align*}
    for all $a,b \in X$, then $(X,r)$ is a left non-degenerate solution if and only if $\varphi$ is a twist.
Moreover, any left non-degenerate solution can be obtained that way.
\end{lemma}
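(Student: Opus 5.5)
Writing $\lambda_a(b)=\varphi_a(b)$ for the twist map and $\rho_b(a)=\varphi^{-1}_{\varphi_a(b)}(\varphi_a(b)\triangleright a)$, I will prove the lemma by checking that (Y1) becomes the twist identity \eqref{eq:twist} and that (Y2), (Y3) follow from self-distributivity once (Y1) holds. Conversely, I will build the shelf from a given solution.

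First, left non-degeneracy is automatic, since each $\varphi_a\in\Sym_X$. Condition \eqref{first} reads $\varphi_a\varphi_b=\varphi_{\varphi_a(b)}\varphi_{\rho_b(a)}$, and with the given formula for $\rho_b(a)$ this is literally \eqref{eq:twist}. So (Y1) holds if and only if $\varphi$ satisfies the twist identity. The clause requiring $\varphi_a\in\Aut(X,\triangleright)$ has to come from the remaining equations.

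The cleanest route to the remaining equations is the standard derived-solution correspondence. Given a left non-degenerate $r$, define the conjugation
\[
J(a,b)=(a,\lambda_a(b)), \qquad r' = J\, r\, J^{-1}.
\]
One computes $r'(a,b)=(b,\sigma_b(a))$, where $\sigma_b(a)=\lambda_b\rho_{\lambda_a^{-1}(b)}(a)$. It is known (Soloviev, Lebed–Vendramin) that $r$ is a solution if and only if:
\begin{itemize}
\item[(i)] $(X,\triangleright)$ with $b\triangleright a:=\sigma_b(a)$ is a shelf;
\item[(ii)] $\lambda$ satisfies (Y1);
\item[(iii)] $\lambda_a(b\triangleright c)=\lambda_a(b)\triangleright\lambda_a(c)$, i.e.\ each $\lambda_a$ is a shelf automorphism.
\end{itemize}
The main work is to verify this equivalence directly. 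I would expand (Y2) and (Y3) after substituting $\rho_y(x)=\lambda^{-1}_{\lambda_x(y)}(\lambda_x(y)\triangleright x)$. Using (Y1) to commute the $\lambda$'s, (Y3) should reduce to left self-distributivity of $\triangleright$ conjugated by $\lambda$'s, and (Y2) to the automorphism property (iii).

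Applying this to the given data, the derived operation of $r$ is exactly $\triangleright$, so (i) holds by hypothesis. Hence $r$ is a solution if and only if (Y1) holds and each $\varphi_a\in\Aut(X,\triangleright)$, which is precisely the statement that $\varphi$ is a twist.

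For the final claim, start from any left non-degenerate solution $r$. Set $\varphi=\lambda$ and define $b\triangleright a:=\lambda_b\rho_{\lambda_a^{-1}(b)}(a)$. Then (i) makes $(X,\triangleright)$ a shelf, and unwinding the definition recovers $r$ in the stated form.

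The main obstacle is the expansion of (Y2) and (Y3). The bookkeeping of nested inverses $\lambda^{-1}_{\lambda_x(y)}$ is error-prone, so I would carry out the check in the conjugated coordinates $r'=JrJ^{-1}$. In those coordinates the braid relation for $r'$ splits cleanly into the three conditions above.
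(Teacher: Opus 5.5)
The paper does not prove this lemma; it quotes it from \cite[Theorem 2.15]{DoRySte24}. Your skeleton is the standard derived-solution argument and is sound. \eqref{first} is literally the twist identity, the derived operation $\lambda_b\rho_{\lambda_a^{-1}(b)}(a)$ of your $r$ equals $b\triangleright a$, and taking $\varphi=\lambda$ recovers an arbitrary left non-degenerate $r$. So everything rests on the claim that a left non-degenerate $r$ is a solution if and only if your (i), (ii), (iii) hold. That claim is true, but the way you propose to verify it does not work as stated.

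Conjugating by $J$ on $X\times X$ does not transport the braid relation on $X^3$. Moreover, the braid relation for $r'(a,b)=(b,b\triangleright a)$ alone encodes only self-distributivity: its \eqref{first} and \eqref{second} are trivial because its left map is the identity. So it cannot ``split into the three conditions''. The correct device is the map $J_3(x,y,z)=(x,\lambda_x(y),\lambda_x\lambda_y(z))$ on $X^3$, which is bijective by left non-degeneracy. A direct check gives $J_3(r\times\id_X)=(r'\times\id_X)J_3$ if and only if \eqref{first} holds. On the other hand, $J_3(\id_X\times r)J_3^{-1}=T$ holds always, where $T(x,u,v)=(x,v,v\triangleright^x u)$ and $a\triangleright^x b:=\lambda_x(\lambda_x^{-1}(a)\triangleright\lambda_x^{-1}(b))$; thus $T=\id_X\times r'$ only when (iii) holds. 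Since \eqref{first} is the equality of first components in the braid relation, $r$ is a solution if and only if \eqref{first} holds and, writing $R=r'\times\id_X$, we have $RTR=TRT$. Evaluating both sides gives
\begin{align*}
RTR(x,u,v)&=\bigl(v,\; v\triangleright u,\; v\triangleright^{u}(u\triangleright x)\bigr),\\
TRT(x,u,v)&=\bigl(v,\; v\triangleright^{x} u,\; (v\triangleright^{x} u)\triangleright^{v}(v\triangleright x)\bigr).
\end{align*}
Equality of the second components for all $x,u,v$ is exactly your (iii); this is \eqref{second} in the new coordinates. Then $\triangleright^x=\triangleright$ for every $x$, and equality of the third components is left self-distributivity of $\triangleright$; this is \eqref{third}. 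With this computation replacing your last paragraph, both directions of the lemma follow, including the ``moreover'' part, which needs that the derived operation of any solution is a shelf.
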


\smallskip

We now introduce a particular spindle that will be used, together with \cref{thm:twist_solution}, to prove the theorem on solutions. The proof of the following lemma is omitted, since it consists of a straightforward computation.
\begin{lemma}
   Let $(S,\,\cdot)$ be a right group and $\triangleright$ the binary operation on $S$ given by
    $$
    a\triangleright b:= g_a^{-1}\,b\,a,
    $$
    for all $a,b\in S$. Then, the structure $(S,\, \triangleright)$ is a spindle that we call \emph{conjugation spindle} on $(S,\,\cdot)$.
\end{lemma}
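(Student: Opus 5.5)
Fix an idempotent $0$ of $(S,\cdot)$ and write $S = G\cdot E$ with $G = S\cdot 0$, so that $a = g_a\, e_a$ with $g_a = a\cdot 0\in G$ and $e_a\in E$. Here $g_a^{-1}$ is the inverse of $g_a$ in the group $G$, and juxtaposition denotes the product in $S$. The proof will only use three facts. Every idempotent is a left identity, so $e\,x = x$ for $e\in E$ and $x\in S$. The map $a\mapsto g_a$ is a semigroup homomorphism, i.e.\ $g_{ab} = g_a g_b$; this holds because $g_{ab} = ab0 = a\,0\,b\,0$, using $b0 = 0b0$ (since $0b = b$). Finally, $g_a^{-1} a = g_a^{-1} g_a e_a = 0\, e_a = e_a$.

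For the spindle property, we have $a\triangleright a = g_a^{-1} a\, a = e_a\, a = a$, because $e_a$ is a left identity.

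For self-distributivity, I will first compute $g_{a\triangleright b}$. By the homomorphism property,
\[
g_{a\triangleright b} = g_a^{-1} g_b g_a ,
\]
where $g_{g_a^{-1}} = g_a^{-1}$ because $g_a^{-1}\in G$ is fixed by right multiplication by $0$. The left-hand side of \eqref{self_distri} is $a\triangleright(b\triangleright c) = g_a^{-1} g_b^{-1} c\, b\, a$. For the right-hand side,
\[
(a\triangleright b)\triangleright(a\triangleright c)
= \left(g_a^{-1} g_b g_a\right)^{-1} g_a^{-1} c\, a\, g_a^{-1} b\, a
= g_a^{-1} g_b^{-1} g_a\, g_a^{-1} c\, a\, g_a^{-1} b\, a .
\]
Two simplifications remain. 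First, $g_a g_a^{-1} = 0$, and $0$ is a left identity, so the leading factor reduces to $g_a^{-1} g_b^{-1} c$. Second, $a\, g_a^{-1} = g_a e_a g_a^{-1} = g_a g_a^{-1} = 0$, because $e_a$ is a left identity. Then $0\, b = b$, and the right-hand side reduces to $g_a^{-1} g_b^{-1} c\, b\, a$, which equals the left-hand side.

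The only delicate point is keeping track of the one-sided identities. Elements of $E$ act as identities only on the left, so $a\,g_a^{-1}$ must be simplified by absorbing $e_a$ into the group element to its right, not by any cancellation on the right. Likewise, the inverse of $g_{a\triangleright b}$ must be taken inside the group $G$ with identity $0$. Once these are handled, the verification is purely mechanical.
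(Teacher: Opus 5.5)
Your proof is correct. The three facts you isolate (idempotents are left identities, $a\mapsto g_a$ is a homomorphism, and $g_a^{-1}a=e_a$) suffice, and the simplifications $g_a g_a^{-1}=0$ and $a\,g_a^{-1}=0$ complete the self-distributivity check. The paper omits the proof as ``a straightforward computation'', and your direct verification is exactly that computation.
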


\smallskip

Given any left $RG$-semibrace $B$, we introduce and investigate, for every $b\in B$, a map $\rho_b$ 
that serves as the second component of the solutions arising from $B$, relating it to a twist of the conjugation spindle.
\begin{proposition}\label[proposition]{rho}
    Let $B$ be a left $RG$-semibrace and, for every $b\in B$, define a map $\rho_b:B\to B$ by setting 
    \begin{align}\label{def_rho}
        \rho_b(a) = h_{\lambda_a(b)}^-\circ a \circ b,
    \end{align}
    for all $a \in B$. Then, the following hold:
    \begin{itemize}
        \item[\rm (a)] $a \circ b=\lambda_a(b) \circ \rho_b(a)$, for all $a, b \in B$.
        \item[\rm (b)] $\rho\colon B\to B^B$, $b\mapsto\rho_b$, is a semigroup anti-homomorphism from $(B,\circ)$ to $B^B$.
       \item[\rm (c)] $\rho_b(B)\subseteq H\circ f_b$, for  every $b\in B$.
         \item[\rm (d)]  $\rho_b(h \circ f)=\rho_b(h)$, for all $b \in B$, $h \in H$, $f \in F$. 
    \end{itemize}
\end{proposition}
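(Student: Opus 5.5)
All four items follow by direct computation from three facts about the right group $(B,\circ)=H\circ F$: every $f\in F$ is a left identity, $0$ is a left identity, and $x\circ y=h_x\circ f_x\circ y=h_x\circ y$. I also use the semigroup homomorphism $\lambda$ of \cref{lambda1} and \cref{somma_cerchietto}(a). The only step that needs a real idea is (b), which reduces to a compatibility identity for the $h$-components; (a), (c) and (d) are short checks.

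For (a), set $x=\lambda_a(b)$. Then
$$x\circ h_x^-=h_x\circ f_x\circ h_x^-=h_x\circ h_x^-=0,$$
because $f_x$ is a left identity. Hence $\lambda_a(b)\circ\rho_b(a)=x\circ h_x^-\circ a\circ b=0\circ a\circ b=a\circ b$, since $0$ is a left identity too.

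For (b), I will show $\rho_{b\circ c}=\rho_c\rho_b$.
\begin{itemize}
\item First note that $b\mapsto h_b=b\circ 0$ is a homomorphism into the group $H$, because $x\circ 0\circ y\circ 0=x\circ y\circ 0$. Consequently $h^-_{x\circ y}=h_y^-\circ h_x^-$.
\item Expanding the definition gives $\rho_c(\rho_b(a))=h^-_{\lambda_{\rho_b(a)}(c)}\circ h^-_{\lambda_a(b)}\circ a\circ b\circ c$.
\item So it suffices to prove $\lambda_a(b\circ c)=x\circ y$ with $x=\lambda_a(b)$ and $y=\lambda_{\rho_b(a)}(c)$; the previous point then gives $h^-_{\lambda_a(b\circ c)}=h_y^-\circ h_x^-$, which is exactly the required prefix.
\item For that identity, use \cref{somma_cerchietto}(a) and additivity of $\lambda_a$ to get $\lambda_a(b\circ c)=\lambda_a(b)+\lambda_{a\circ b}(c)$.
\item Then use item (a) together with \cref{lambda1}(c) to get $\lambda_{a\circ b}=\lambda_{x\circ\rho_b(a)}=\lambda_x\lambda_{\rho_b(a)}$.
\item Therefore $\lambda_a(b\circ c)=x+\lambda_x(y)=x\circ y$, again by \cref{somma_cerchietto}(a).
\end{itemize}
This factorisation of $\lambda_{a\circ b}$ through item (a) is the main point of the whole proof.

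For (c), any $z\in B$ satisfies $z\circ f_b=h_z\circ f_z\circ f_b=h_z\circ f_b\in H\circ f_b$. Writing $b=h_b\circ f_b$, we have $\rho_b(a)=\bigl(h^-_{\lambda_a(b)}\circ a\circ h_b\bigr)\circ f_b$, which is of this form.

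For (d), first $h_{h\circ f}=h\circ f\circ 0=h\circ 0=h$, so \cref{lambda1}(d) gives $\lambda_{h\circ f}=\lambda_h$. Second, $h\circ f\circ b=h\circ b$ because $f$ is a left identity. Substituting both into \eqref{def_rho} gives $\rho_b(h\circ f)=\rho_b(h)$.
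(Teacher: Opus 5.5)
Your proof is correct. Parts (a) and (d) essentially coincide with the paper's arguments, but (b) and (c) are obtained somewhat differently.

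In (b), you and the paper use the same reduction. Writing $x=\lambda_a(b)$ and $y=\lambda_{\rho_b(a)}(c)$, both reduce $\rho_{b\circ c}=\rho_c\rho_b$ to the identity $\lambda_a(b\circ c)=x\circ y$ together with $h^-_{x\circ y}=h^-_y\circ h^-_x$. The paper uses the latter tacitly, and you justify it. The difference lies in how the identity is proved.
\begin{itemize}
\item The paper expands $\lambda_{\rho_b(a)}=\lambda_{h^-_x}\lambda_a\lambda_b$ from the definition of $\rho$. It then unwinds $\lambda_{h^-_x}(z)=h^-_x\circ(h_x+z)$ using the inverse formula from the proof of \cref{lambda1}(a), and finally invokes $g_{h_x}=g_x$ from \cref{prop_GHEF}(a).
\item You instead read $\lambda_{a\circ b}=\lambda_x\lambda_{\rho_b(a)}$ off item (a), and finish with $x+\lambda_x(y)=x\circ y$.
\end{itemize}
Your route is shorter and avoids \cref{prop_GHEF}. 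It shows that (b) is a formal consequence of (a), the multiplicativity of $\lambda$, and \cref{somma_cerchietto}(a). Your factorisation $\lambda_a\lambda_b=\lambda_{\lambda_a(b)}\lambda_{\rho_b(a)}$ is precisely the twist identity the paper later uses in the proof of \cref{t}.

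In (c), the paper goes through (b), writing $\rho_b=\rho_{f_b}\rho_{h_b}$ and expanding the composite. You simply factor $f_b$ off on the right using $z\circ f_b=h_z\circ f_b$. This is more direct and makes (c) independent of (b).
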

\begin{proof}
  Statement (a) is exactly \eqref{def_rho} by Theorem~\ref{1.1}(b). Moreover, if $a,b,c\in B$, by \cref{lambda1}, we have
    \begin{align*}
        \lambda_a(b) \circ \lambda_{\rho_b(a)}(c)&= \lambda_a(b)\circ \lambda_{h^-_{\lambda_a(b)}\circ a\circ b}(c)= \lambda_a(b)\circ 
        \lambda_{h^-_{\lambda_a(b)}}\lambda_a\lambda_b(c)\\
        &= \lambda_a(b)\circ h^-_{\lambda_a(b)}
        \circ\left(h_{\lambda_a(b)} + \lambda_a\lambda_b(c)\right)\\
        &= g_{h_{\lambda_a(b)}} + \lambda_a\lambda_b(c)
        \underset{\eqref{prop_GHEF}(a)}{=} g_{\lambda_a(b)} + e_{\lambda_a(b)} +\lambda_a\lambda_b(c)\\
        &= \lambda_a(b) + \lambda_a\lambda_b(c)= \lambda_a(b\circ c),
    \end{align*}
   where, in the last equality, we use Proposition \ref{lambda1}(c) and \cref{somma_cerchietto}(a). Thus, we obtain that
    \begin{align*}
        \rho_c\rho_b(a)
        &=  h^-_{\lambda_{\rho_b(a)}(c)} \circ h^-_{\lambda_a(b)} \circ a \circ b \circ c
        =  h^-_{\lambda_a(b) \circ \lambda_{\rho_b(a)}(c)} \circ a \circ b \circ c\\
        &= h^-_{\lambda_a(b\circ c)}\circ a\circ b\circ c
        = \rho_{b\circ c}(a),
    \end{align*}
    hence (b) holds. As far as (c) is concerned, assume that $b=h\circ f$ with $h\in H$ and $f\in F$, so that $f=f_b$. From Definition~(\ref{def_rho}) and (b), for all $a \in B$, we have $$
    \rho_b(a)
    = \rho_f\rho_h(a)
    = h^-_{\lambda_{\rho_h(a)}(f)} \circ h^-_{\lambda_h(a)} \circ h \circ h_a\circ f \in H \circ f.
    $$  
    Moreover, again, from Definition~(\ref{def_rho}), $\lambda_{f}=\id_B$ implies that 
    $$\rho_b\left(h\circ f \right)=h^-_{\lambda_{h \circ f}(b)}\circ h \circ f \circ b=h^-_{\lambda_{h}(b)}\circ h \circ b=\rho_b(h).$$
    This proves (d).
\end{proof}

Notice that $\rho_b(a)$ is $ (\lambda_a(b))\backslash (a \circ b)$, where $\backslash$ is the inverse of the operation $\circ$ of the right group $(B,\circ)$.

In the following, we gather several properties of the map $\rho$. Before stating the result, we introduce the mappings $\mu_f:H \to B$, for all $f \in F$, and $\pi_H:B \to H$ defined, respectively, by setting 
\begin{align*}
    \mu_f(x) = x \circ f \qquad\text{and}\qquad \pi_H(b) = b \circ 0 \,,
\end{align*}
for all $x \in H$ and $b \in B$. Also, for every mapping $f\colon X\to X$ we will denote by $\sim_f$ the equivalence relation on the set $X$ defined by $x\sim_f y$ if $f(x)=f(y)$.

\begin{proposition}\label{daria}
    Let $B$ be a left $RG$-semibrace. For all  $b\in B$, let $\rho'_b\colon H\to H$ be the map defined by
         $$\rho'_b(h)=\rho_b(h) \circ 0$$
         for all $h \in H$.
         Then the following hold: 
    \begin{itemize}
         \item[\rm (a)]
         $\rho_b = \mu_{f_b}\rho'_b\pi_H$\, and\,
         the mapping $\rho'\colon B\to H^H$, $b\mapsto \rho'_b$, is a semigroup anti-homomorphism of $(B,\circ)$ into $H^H$.
  \item[\rm (b)] { $\rho_0\colon B\to B$ and $\varepsilon:=\rho'_0\colon H\to H$ are idempotent mappings with the same image $A$, where $A=(G+F)\cap H$.
   \item[\rm (c)] $\varepsilon H^H\varepsilon$ is a subsemigroup of $H^H$, $\varepsilon H^H\varepsilon$ is a monoid with identity $\varepsilon$, and $\rho'(H)\subseteq U\left(\varepsilon H^H\varepsilon\right)$, the group of units of $\varepsilon H^H\varepsilon$.
  \item[\rm (d)] 
  An element $f\in H^H$ belongs to $\varepsilon H^H\varepsilon$ if and only if $f(H)\subseteq A$ and $\sim_\varepsilon\subseteq\sim_f$.
   \item[\rm (e)] 
   An element $f\in H^H$ belongs to $U\left(\varepsilon H^H\varepsilon\right)$ if and only if $f(H)= A$ and $\sim_\varepsilon=\sim_f$.
   \item[\rm (f)] $U(\varepsilon H^H\varepsilon)\cong\Sym_A$, via the isomorphism $\Phi$ that maps every $f\in U(\varepsilon H^H\varepsilon)$, $f\colon H\to H$, to its restriction 
   $f_{|_A}\colon A\to A$.
    \item[\rm (g)] The restriction $\rho'_{|_H}\colon H\to U\left(\varepsilon H^H\varepsilon\right)$, $h\mapsto \rho'_h$, is a group anti-homomorphism of $(H,\circ)$ into $U\left(\varepsilon H^H\varepsilon\right)$.
  \item[\rm (h)] Let $\varphi\colon B\to H\times F$, $x \mapsto(h_x,f_x)$, denote the canonical isomorphism (relative to the choice of the element $0$ in $F$). Then, for all  $b\in B$, 
  \begin{align*}
  \rho_b
  = \mu_{f_b}\rho'_b\pi_H
  =\varphi^{-1}(\rho'_{h_b}\times c_{f_b})\varphi,
  \end{align*}
  where $\rho'_{h_b}\times c_{f_b}\colon H\times F\to H\times F$, $\rho'_{h_b}\colon H\to H$ belongs to $U(\varepsilon H^H\varepsilon)$, and $c_{f_b}\colon F\to F$ is the mapping constantly equal to $f_b$. In particular, 
  $$
  \rho_h=\varphi^{-1}(\rho'_{h}\times c_0)\varphi
  \qquad\text{and}\qquad
  \rho_f=\varphi^{-1}(\varepsilon\times c_f)\varphi,
  $$
  for all $h\in H$ and $f\in F$.
 \item[\rm (i)] The semigroup homomorphism $\rho\colon B^{\op}\to B^B$ of the  left group $B^{\op}$ into the monoid $B^B$ factors through the left group $F^{\op}\times \Sym_A$.}
\end{itemize}\end{proposition}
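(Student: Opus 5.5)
We will work through the items in order, using \cref{rho} as the main tool. Write $b=h_b\circ f_b$.

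For (a), take $a\in B$. By \cref{rho}(d) we have $\rho_b(a)=\rho_b(h_a)=\rho_b(\pi_H(a))$. By \cref{rho}(c), $\rho_b(h_a)\in H\circ f_b$, so $\rho_b(h_a)=(\rho_b(h_a)\circ 0)\circ f_b=\mu_{f_b}\rho'_b(h_a)$, using that $x\circ 0\circ f=x\circ f$ because $0$ is a left identity. For the anti-homomorphism property, $\rho'_{b\circ c}(h)=\rho_c\rho_b(h)\circ 0$. Since $\rho_b(h)=\rho'_b(h)\circ f_b$, \cref{rho}(d) gives $\rho_c(\rho_b(h))=\rho_c(\rho'_b(h))$, and so $\rho'_{b\circ c}=\rho'_c\rho'_b$.

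For (b), idempotency comes from $0\circ0=0$ and \cref{rho}(b). The equality of images follows from $\rho_0=\mu_0\rho'_0\pi_H$ with $\mu_0=\id_H$, since $\rho_0(B)\subseteq H\circ 0=H$. To identify the image, we compute $\rho_0(h)=h^-_{\lambda_h(0)}\circ h\circ 0=h^-_{e_h}\circ h$ using \cref{prop_GHEF}(a). We then use \cref{somma_cerchietto} to relate this to $g_h$ and to $F$: $x\in\rho_0(B)$ iff $x\in H$ and $\rho_0(x)=x$, i.e.\ $\lambda_x(0)=e_x\in F$, i.e.\ $x=g_x+e_x\in G+F$. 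This yields $A=(G+F)\cap H$. I expect this to be the most delicate calculation, and I would first check that the fixed points of $\rho_0$ on $H$ are exactly the $h$ with $h_{e_h}=0$, i.e.\ $e_h\in F$.

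Items (c)–(f) are general semigroup facts about an idempotent $\varepsilon$ in the full transformation monoid $H^H$. The set $\varepsilon H^H\varepsilon$ is closed under composition with identity $\varepsilon$. Moreover, $f=\varepsilon f\varepsilon$ iff $f(H)\subseteq \operatorname{Fix}(\varepsilon)=A$ and $f$ factors through $\varepsilon$. Units are those $f$ inducing a bijection between $H/{\sim_\varepsilon}\cong A$ and $A$, which gives (e), and restriction to $A$ gives the isomorphism with $\Sym_A$ in (f). The inclusion $\rho'(H)\subseteq U$ in (c), and also (g), follow from (a): for $h\in H$ we have $\rho'_h=\rho'_0\rho'_h=\rho'_h\rho'_0$ because $h\circ0=0\circ h=h$, and $\rho'_{h^-}$ is an inverse, since $\rho'_{h^-}\rho'_h=\rho'_{h\circ h^-}=\rho'_0=\varepsilon$.

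For (h), we rewrite (a) in coordinates: $\varphi\rho_b\varphi^{-1}(h,f)=(\rho'_b(h),f_b)$. We also note $\rho'_b=\rho'_{f_b}\rho'_{h_b}=\varepsilon\rho'_{h_b}=\rho'_{h_b}$, since $\rho'_f=\rho'_{0}$ by \cref{rho}(d) applied through $f\circ 0$; this needs a check. Finally, (i) follows by sending $b\mapsto (f_b,\Phi(\rho'_{h_b}))$, which is a homomorphism from $B^{\op}$ by (a), (g) and (h). Then $\rho$ factors as this map followed by $(f,\sigma)\mapsto\varphi^{-1}(\Phi^{-1}(\sigma)\times c_f)\varphi$.
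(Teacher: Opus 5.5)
Your arguments for (a)--(g) are correct and follow the paper's route. In (a), your derivation of $\rho'_{b\circ c}=\rho'_c\rho'_b$ via \cref{rho}(d) is more careful than the paper's bare appeal to \cref{rho}(b).

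The gap is the step you flagged in (h): the claim $\rho'_f=\varepsilon$ for $f\in F$, equivalently $\rho'_b=\rho'_{h_b}$. Both your (h) and your (i) rest on it, and your justification does not work. \cref{rho}(d) says $\rho_b$ does not change when you change the $F$-part of its \emph{argument}; it says nothing about changing the index $b$. The relations $f\circ 0=0$ and $0\circ f=f$, combined with the anti-homomorphism property, only give $\varepsilon\rho'_f=\varepsilon$ and $\rho'_f\varepsilon=\rho'_f$, and these do not force $\rho'_f=\varepsilon$. The paper's proof of (h) makes the same substitution without comment: it writes $\mu_{f_b}\rho'_{h_b}\pi_H$ where (a) only gives $\mu_{f_b}\rho'_b\pi_H$. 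In fact the identity is false in general, so this step cannot be repaired.

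Here is a counterexample. Let $V=\{\id,(12),(34),(12)(34)\}$ and let $\chi\colon V\to\mathbb{Z}_2$ be the homomorphism with kernel $\{\id,(12)(34)\}$. Let $B=\mathbb{Z}_2\times\{1,2,3,4\}$ with $(s,e)+(t,e')=(s+t,e')$. The map $(\sigma,f)\mapsto(\chi(\sigma),\sigma(f))$ is a bijection $V\times\{1,3\}\to B$, so you can transport the right group $(\sigma,f)(\tau,f')=(\sigma\tau,f')$ to $B$. Writing $P_a\in V$ for the $V$-coordinate of $a$, this gives $(s,e)\circ(t,e')=(s+t,P_{(s,e)}(e'))$. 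Hence $a\circ b=a+\lambda_a(b)$ with $\lambda_a=\id\times P_a\in\Aut(B,+)$, and \eqref{bjip1} holds.

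With $0=(0,1)$ one has $F=\{(0,1),(0,3)\}$ and $H=\{(0,1),(0,2),(1,1),(1,2)\}$. Take $h=(1,2)$, so $P_h=(12)$, and $f=(0,3)$.
\begin{itemize}
\item Since $\lambda_h(f)=(0,3)\in F$, we get $\rho_f(h)=h\circ f=(1,3)$ and $\rho'_f(h)=(1,2)$.
\item Since $\lambda_h(0)=(0,2)=h_{(0,2)}=h^-_{(0,2)}$, we get $\varepsilon(h)=(0,2)\circ(1,2)=(1,1)$.
\end{itemize}
Therefore $\rho_f(h)=(1,3)\neq(1,4)=\varepsilon(h)\circ f$, that is, $\rho_f\neq\varphi^{-1}(\varepsilon\times c_f)\varphi$. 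Indeed $\rho'_f(H)=\{(0,1),(1,2)\}\not\subseteq A=\{(0,1),(1,1)\}$, so $\rho'_f$ is not even in $\varepsilon H^H\varepsilon$.

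What survives of (h) is $\rho_b=\varphi^{-1}(\rho'_b\times c_{f_b})\varphi$ with $\rho'_b=\rho'_{f_b}\rho'_{h_b}$. Statement (i) can be rescued by replacing the last map of your factorization with $(f,\sigma)\mapsto\varphi^{-1}(\rho'_f\Phi^{-1}(\sigma)\times c_f)\varphi$. This map is still multiplicative, because $\Phi^{-1}(\sigma)\rho'_{f'}=\Phi^{-1}(\sigma)\varepsilon\rho'_{f'}=\Phi^{-1}(\sigma)$.
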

    \begin{proof}
    (a) 
    If $a,b \in B$, then
    \begin{align*}
        \mu_{f_b}\rho'_b\pi_H(a) &=
        \rho'_b(a \circ 0) \circ f_b =
        \rho_b(a \circ 0) \circ 0 \circ f_b =
        \rho_b(a) \,,
    \end{align*}
    where the last equality is a consequence of (c) and (d) in \cref{rho}.
      In addition, the mapping\, 
      $\rho'$\, 
      is a semigroup anti-homomorphism of $(B,\circ)$ into the monoid $H^H$ by \cref{rho}(b).
 
    (b)\,--\,(c) Since $\rho'\colon B\to H^H$ is a semigroup homomorphism of $B^{\op}$ into $H^H$, its restriction $\rho'_{|_H}\colon H\to H^H$ is a semigroup homomorphism of the semigroup $H^{\op}$ into $H^H$. Now $0$ is an idempotent of $H$, so that its image $\varepsilon:=\rho'_0$ via the semigroup homomorphism $\rho'_{|_H}$ is an idempotent of $H^H$, and the identity $0\circ h\circ 0=h$, for every $h\in H$, implies that the image $\rho'_{|_H}(H)$ of $\rho'_{|_H}$ is contained in $\varepsilon H^H\varepsilon$. Statement (c) now follows immediately. Similarly, $\rho\colon B\to B^B$ is a semigroup homomorphism of $B^{\op}$ into $B^B$, so that $\rho_0\colon B\to B$ is idempotent. From (a), we get that $\rho_0=\mu_{f_0} \rho'_b\pi_H$, where $\mu_{f_0} $ is the embedding of $H$ in $B$ and $\pi_H$ is an onto mapping. It follows that $\rho$ and $\rho'$ have the same image $A$, say.
    
  By \cref{prop_GHEF}(a), $\lambda_b(0)=e_{h_b}$ for every $b\in B$, so that $$\rho_0(b)
        = h_{\lambda_b(0)}^-\circ b \circ 0=h_{e_{h_b}}^-\circ b \circ 0.$$ Hence, $\rho'_0(h)=\rho_0(h) \circ 0=h_{e_{h}}^-\circ h$, for every $h\in H$.
        For any idempotent mapping, the image is equal to the set of its fixed points, so 
        \begin{align*}
            A&=\{\,h\in H\mid \rho'_0(h)=h\,\}=\{\,h\in H\mid h_{e_{h}}^-\circ h=h\,\}=\{\,h\in H\mid h_{e_{h}}^-=0\,\}\\
            &=\{\,h\in H\mid e_h\in F\,\}=\{\,h\in H\mid \pi_E(h)\in F\,\}=
       \left( (\pi_E)^{-1}(F )\right)\cap H\\
       &=(G+F)\cap H.
        \end{align*} Here $\pi_E\colon B\to B$ is the mapping defined by $\pi_E(b)=e_b=b\backslash b$, where $\backslash$ is the inverse operation of the operation $+$ of the right group $B.$
       
       (d), (e), (f) and (g)  are elementary exercises. For instance, if $f\in \varepsilon H^H\varepsilon$, then $f\varepsilon=f$, so that $\sim_\varepsilon\subseteq\sim_f$. Similarly $\varepsilon f=f$, which implies $f(H)\subseteq\varepsilon(H)=A$.
       
       (h)  For every $h\in H,\ f\in F$, we have that $$\rho_b(h\circ f)=\mu_{f_b} \rho'_{h_b}\pi_H(h\circ f)=\mu_{f_b} \rho'_{h_b}(h)=\rho'_{h_b}(h)\circ f_b$$ and $$\varphi^{-1}\left(\rho'_{h_b}\times c_{f_b})\varphi(h\circ f\right)=\varphi^{-1}\left(\rho'_{h_b}\times c_{f_b}\right)(h,f)= \varphi^{-1}\left(\rho'_{h_b}(h),f_b\right)=\rho'_{h_b}(h)\circ f_b.$$ Therefore, $\rho_b=\varphi^{-1}\left(\rho'_{h_b}\times c_{f_b}\right)\varphi$. For $b=h\in H$, we get $\rho_h=\varphi^{-1}\left(\rho'_{h}\times c_0\varphi\right)$, while for $b=f\in F$, we find that $\rho_f=\varphi^{-1}\left(\rho'_0\times c_{f}\right)\varphi=\varphi^{-1}\left(\varepsilon\times c_{f}\right)\varphi$.
           
           (i) The semigroup homomorphism $\rho\colon B^{\op}\to B^B$ is the composite mapping of:
           (1) the semigroup isomorphism $B^{\op}\to F^{\op}\times H^{\op}$,\\
           (2) the semigroup morphism $F^{\op}\times H^{\op}\to F^F\times U\left(\varepsilon H^H\varepsilon\right)$, $(f,h)\mapsto \left(c_f,\rho'_h\right)$,\\  
           (3) the semigroup isomorphism $F^F\times U\left(\varepsilon H^H\varepsilon\right)\to F^F\times \Sym_A$, \\
           (4) the semigroup homomorphism $F^F\times \Sym_A\to B^B$ that maps every element $(g, \sigma)\in F^F\times \Sym_A$ to the mapping $\varphi^{-1}\left(g,\Phi^{-1}(\sigma)\right)\varphi\colon B\to B$.
 \end{proof}

\smallskip

\begin{theorem}\label[theorem]{t}
    Let $B$ be a left $RG$-semibrace. Then the map~\hbox{$r_B:B\times B\to B\times B$} defined by~$r_B(a,b)=\left(\lambda_a(b), \rho_b(a)\right),$
    for all $a, b \in B$, is a left non-degenerate solution. 
    \begin{proof}
 Let us prove that the map $\lambda$ is a twist of the conjugation spindle $(B, \triangleright)$ where $a\triangleright b := -g_a + b + a$, for all $a,b\in B$. 
By \cref{lambda1}(a),
\begin{align*}
    g_{\lambda_x(a)}  
    = \lambda_x(a) + 0 
    = \lambda_x(g_a) + \lambda_x(e_a) +  0 
    = \lambda_x(g_a) +  0,
\end{align*}
for all $x,a\in B$. Hence, we obtain that
\begin{align*}
    \lambda_x(a\triangleright b) 
    = \lambda_x(-g_a) + \lambda_x(b) + \lambda_x(a)
    = -g_{\lambda_x(a)} + \lambda_x(b)+ \lambda_x(a)
    = \lambda_x(a)\triangleright\lambda_x(b)
\end{align*}
for all $x,a,b\in B$, namely, for all $x\in B$, $\lambda_x$ is a shelf homomorphism and since it is bijective, $\lambda_x\in \operatorname{Aut}(B, \triangleright)$. 
Now, let us observe that
\begin{align*}
  \lambda_x \rho_{\lambda^{-1}_y(x)}(y)
    &= \lambda_x\left(h^-_{\lambda_y  \lambda_{h^-_y}(x)}\circ y\circ \lambda_{h^-_y}(x)\right)
    = \lambda_x\left(h^-_{x}\circ (h_y + x)\right)\\
    &= -g_x +  x\circ h^-_x\circ (h_y + x)
    = -g_x + h_y  + x =  -g_x + g_{h_y}  + x\\
   & = -g_x + g_y  + x = x\triangleright y,
\end{align*}
for all $x,y\in B$, where in the second last equality  we 
use \cref{prop_GHEF}(a).
Therefore, by the bijectivity of the maps $\lambda_x$, we obtain that 
$$
\rho_y(x) = \lambda^{-1}_{\lambda_x(y)}\left(\lambda_x(y)\triangleright x\right),
$$ 
for all $x,y\in B$. As a consequence, 
equality \eqref{eq:twist} follows by \cref{lambda1}(c) and \cref{rho}(a). 
Therefore, $r_B$ is a left non-degenerate solution by \cref{thm:twist_solution}. 
    \end{proof}
\end{theorem}

\smallskip

From Remark~\ref{1e2} and Proposition~\ref{daria}(h), we get the following result.

\begin{corollary}\label[corollary]{cor:GH_decomp}
 Let $B$ be a left $RG$-semibrace. Then the corresponding left non-degenerate solution $r_B\colon B\times B\to B\times B$ can be identified with the mapping $$r_B\colon (G\times E)\times (H\times F)\to (G\times E)\times (H\times F)$$ defined by $$\left(\left(g_a,e_a\right),\left(h_b,f_b\right)\right)\mapsto \left(\left(\left(\lambda_{g_a+e_a}\right)_1(g^{}_{h_b \circ f_b}),e_{(g_a+e_a) \circ (h_b \circ f_b)}\right), \left(\rho'_{h_b \circ f_b}(h_{g_a+e_a}), f_b \right)\right).$$ 
\end{corollary}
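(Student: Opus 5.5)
The plan is to transport $r_B$ along two different bijections of $B$: the first argument goes through the additive decomposition $B\cong G\times E$, $a\mapsto(g_a,e_a)$, given by \cref{1.1}(d) for $(B,+)$. The second argument goes through the multiplicative decomposition $\varphi\colon B\cong H\times F$, $b\mapsto(h_b,f_b)$, given by \cref{1.1}(d) for $(B,\circ)$. Under this identification, the first component $\lambda_a(b)$ of $r_B(a,b)$ is recorded in $G\times E$ coordinates and the second component $\rho_b(a)$ in $H\times F$ coordinates. The inverse identifications are $(g,e)\mapsto g+e$ and $(h,f)\mapsto h\circ f$, which explains why the formula writes $a=g_a+e_a$ and $b=h_b\circ f_b$ throughout. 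So it suffices to compute the four coordinates $g_{\lambda_a(b)}$, $e_{\lambda_a(b)}$, $h_{\rho_b(a)}$ and $f_{\rho_b(a)}$.

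For the first component, I use Remark~\ref{1e2}. Since $\lambda_a\in\Aut_\SGrp(B,+)$ by \cref{lambda1}(a), we have $\mathcal{F}(\lambda_a)=((\lambda_a)_1,(\lambda_a)_2)$, hence $\lambda_a(b)=(\lambda_a)_1(b+0)+(\lambda_a)_2(e_b)$. This gives $g_{\lambda_a(b)}=(\lambda_a)_1(g_b)$, and with $b=h_b\circ f_b$ this is the stated $(\lambda_{g_a+e_a})_1(g_{h_b\circ f_b})$. For the idempotent part, I use \cref{somma_cerchietto}(a): $a\circ b=a+\lambda_a(b)$. Since the projection $x\mapsto e_x$ is a semigroup homomorphism and $E$ is right zero, $e_{a\circ b}=e_a+e_{\lambda_a(b)}=e_{\lambda_a(b)}$, which matches the stated $e_{(g_a+e_a)\circ(h_b\circ f_b)}$.

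For the second component, I use Proposition~\ref{daria}(h): $\rho_b=\varphi^{-1}(\rho'_{h_b}\times c_{f_b})\varphi$. Hence $\varphi(\rho_b(a))=(\rho'_{h_b}(h_a),f_b)$, and since $h_a=h_{g_a+e_a}$ this is the stated pair. The formula writes $\rho'_{h_b\circ f_b}$ rather than $\rho'_{h_b}$, so I reconcile the two: by Proposition~\ref{daria}(a) and \cref{rho}(b),(d), $\rho'_b=\rho'_{f_b}\rho'_{h_b}$, and on $H$ we have $\rho_{f}(x)\circ 0=\rho_0(x)\circ 0$, i.e.\ $\rho'_{f}=\varepsilon$. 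Then (c) gives $\varepsilon\rho'_{h_b}=\rho'_{h_b}$, so $\rho'_{h_b\circ f_b}=\rho'_{h_b}$. This reconciliation is the only step that needs care, specifically confirming $\rho'_f=\varepsilon$; I would obtain it from \cref{rho}(b) together with $f\circ 0=0$ and $0\circ f=f$. The remaining steps are direct substitutions into the earlier results.
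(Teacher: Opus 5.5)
Your first component is fine: it is the paper's computation $\lambda_a(b)=\lambda_a(g_b)+e_{a\circ b}$, repackaged through Remark~\ref{1e2} and the homomorphism $x\mapsto e_x$. The gap is in the second component, at the step you flagged yourself. The identity $\rho'_f=\varepsilon$ for $f\in F$ is false in general, and therefore so is $\rho'_{h_b\circ f_b}=\rho'_{h_b}$.

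Combined with \cref{rho}(b), the relations $0\circ f=f$ and $f\circ 0=0$ only give $\rho'_f\varepsilon=\rho'_f$ and $\varepsilon\rho'_f=\varepsilon$. Explicitly, for $h\in H$ one has $\rho'_f(h)=h^-_{\lambda_h(f)}\circ h$ and $\varepsilon(h)=h^-_{\lambda_h(0)}\circ h$. These agree only if $\lambda_h(f)\circ 0=\lambda_h(0)\circ 0$, and nothing forces $\lambda_h$ to keep $F$ inside a single $\sim_\circ$-class.

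For a concrete counterexample, take $B=\mathbb{Z}_2\times\mathbb{Z}_2\times\{p,q\}$, set $\delta(p)=0$, $\delta(q)=1$, and define
\[
(\epsilon,i,c)+(\epsilon',i',c')=(\epsilon+\epsilon',i',c'),\qquad (\epsilon,i,c)\circ(\epsilon',i',c')=(\epsilon+\epsilon',\,i+i'+\epsilon(\delta(c)+\delta(c')),\,c').
\]
A direct check shows that this is a left $RG$-semibrace, with
\[
\lambda_{(\epsilon,i,c)}(\epsilon',i',c')=(\epsilon',\,i+i'+\epsilon(\delta(c)+\delta(c')),\,c'),\qquad F=\{(0,0,p),(0,0,q)\}.
\]
Take $0=(0,0,p)$, $f=(0,0,q)$ and $x=(1,0,p)\in H$. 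Then $\lambda_x(f)=(0,1,q)$, whose $H$-component is $(0,1,p)$, while $\lambda_x(0)=0$. Hence
\[
\rho'_f(x)=(0,1,p)\circ x=(1,1,p)\neq(1,0,p)=\varepsilon(x).
\]

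The form $\varphi^{-1}(\rho'_{h_b}\times c_{f_b})\varphi$ in Proposition~\ref{daria}(h) tacitly uses the same identity $\rho'_b=\rho'_{h_b}$, so you cannot lean on it either. In this example $\rho_f(x)=(1,0,q)$ has $H$-component $(1,1,p)=\rho'_f(x)$, not $\rho'_{h_f}(h_x)=\varepsilon(x)$. So your intermediate formula $h_{\rho_b(a)}=\rho'_{h_b}(h_a)$ is wrong there, and the reconciliation cannot repair it.

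The fix is to never pass through $\rho'_{h_b}$. The statement is written with $\rho'_{h_b\circ f_b}=\rho'_b$, which is exactly what the first equality $\rho_b=\mu_{f_b}\rho'_b\pi_H$ of Proposition~\ref{daria}(a) delivers. Equivalently, as in the paper, \cref{rho}(d) gives $\rho_b(a)=\rho_b(h_a)$, and \cref{rho}(c) gives $\rho_b(h_a)\in H\circ f_b$. Hence $h_{\rho_b(a)}=\rho_b(h_a)\circ 0=\rho'_b(h_a)$ and $f_{\rho_b(a)}=f_b$, and no reconciliation step is needed.
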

\begin{proof}
   For all $a,b\in B$, we have
    \begin{align*}
\lambda_{a}\left(b\right)&= -g_a+a \circ b + e_{a \circ b}
=-g_a+a \circ \left(g_b+e_b\right)+e_{a \circ b}\\
&=-g_a+a \circ g_b+\lambda_a\left(e_b\right)+e_{a \circ b}=\lambda_a\left(g_b\right)+e_{a \circ b},
    \end{align*}
  where in the last equality we 
apply that $\lambda_a\left(e_b\right) \in E$.  
  Hence, following the notation in  Remark~\ref{1e2}, $g_{\lambda_a(b)}=\left(\lambda_{a}\right)_1(b)$ and $e_{\lambda_a(b)}=e_{a \circ b}$. Moreover, by \cref{lambda1}(d),
\begin{align*}
\rho^{}_{b}(a)
=  h^-_{\lambda_{h_a}(b)}\circ a\circ b 
= h^-_{\lambda_{h_a}(b)}\circ h_a\circ b\circ f_b
= \rho^{}_{b}(h_a)\circ f_b.
\end{align*}
Thus, $h_{\rho_b(a)}=\rho'_b\left(h_a\right)$ and $f_{\rho_b(a)}=f_b$.
Therefore, the claim follows.
\end{proof}

\smallskip

\begin{Ex}\label[example]{es_banale}
    Let $(B, \cdot)$ be a right group 
    that is not a group and consider the trivial left $RG$-semibrace $(B, \cdot, \cdot)$. Then the map $r_B(x, y)=\left(y, \, h^{-}_{y} \cdot h_x \cdot y\right)$ is a left non-degenerate solution on $B$ that cannot arise as a solution associated with a (cancellative) semibrace on $B$. 
\end{Ex}

\smallskip

Let us now focus on the order of solutions associated to a left $RG$-semibrace. To this purpose, we recall the notions of \emph{index} and \emph{period} of a solution $r$ introduced in \cite{CCSt20-2} that are respectively defined as 
\begin{align*}
    &\indd{\left(r\right)}
    :=\min\left\{\left.j \,\right|\, j\in\mathbb{N}_0, \, \exists \, l\in \mathbb{N}\ r^j=r^l , \ j\neq l\right\},\\
    &\perr{\left(r\right)}
    :=\min\left\{\left.k\,\right| \, k\in\mathbb{N}, \, r^{\indd{\left(r\right)}+k}=r^{\indd{\left(r\right)}}\right\}.
\end{align*}
While being slightly different from the classical ones (cf.~\cite[p.~10]{Ho95}), these definitions are functional to distinguish bijective solutions, having index $0$, from non-bijective ones, having index a positive integer.  

\begin{proposition}
    If $B$ is a left $RG$-semibrace and $r_B$ is the solution associated with $B$, then the following hold:
    \begin{itemize}
        \item[{\rm(1)}] $r_B$ has finite index and period if and only if $G/Z(G)$ is periodic.
        \item[{\rm(2)}] If $n := \exp(G/Z(G))$ is finite, then $r_B^{2n + 1} = r_B$.
    \end{itemize}
    In particular, $r_B = 2n$ if and only if $|E| = 1$.
    \end{proposition}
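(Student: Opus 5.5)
The plan is to conjugate $r_B$ to its \emph{derived} solution, whose powers can be computed explicitly. Let $J\colon B\times B\to B\times B$ be $J(a,b)=(a,\lambda_a(b))$. It is bijective by \cref{lambda1}(a), with $J^{-1}(x,y)=(x,\lambda_x^{-1}(y))$. From the proof of \cref{t} we know $\rho_y(x)=\lambda^{-1}_{\lambda_x(y)}(\lambda_x(y)\triangleright x)$, which gives
\[
r'(x,y):=J\,r_B\,J^{-1}(x,y)=(y,\ y\triangleright x)=(y,\ -g_y+x+y).
\]
Hence $r_B^k=J^{-1}r'^kJ$ for every $k\geq 1$. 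Consequently $r_B$ and $r'$ have the same index and period, and every identity $r_B^{j}=r_B^{l}$ is equivalent to $r'^j=r'^l$.

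Next I would split $r'$ along $B=G+E$. Write $x=g_x+e_x$ and recall that elements of $E$ are left identities for $+$. Then $x+y=g_x+g_y+e_y$, so
\[
y\triangleright x=(-g_y+g_x+g_y)+e_y .
\]
Since $a\mapsto g_a$ and $a\mapsto e_a$ are homomorphisms, $r'$ decomposes as follows.
\begin{itemize}
\item On the $G$-components, $r'$ acts as the derived solution of the conjugation quandle of the group $(G,+)$, namely $(u,v)\mapsto(v,-v+u+v)$.
\item On the $E$-components, $r'$ acts as $(e,e')\mapsto(e',e')$.
\end{itemize}
The $E$-part is idempotent. It is the identity iff $|E|=1$; otherwise it is not injective.

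For the $G$-part, I use that $(u,v)\mapsto(v,-v+u+v)$ preserves the sum $c=u+v$. On the fibre $\{(u,-u+c)\}$ it acts on the first coordinate by $u\mapsto -u+c$. Applying this twice gives $u\mapsto -c+u+c$, so $r'^{2m}$ acts on that fibre as conjugation by $mc$ in both coordinates. Thus $r'^{2m}$ is the identity on $G\times G$ iff $mc\in Z(G)$ for all $c\in G$. This holds iff the exponent of $G/Z(G)$ divides $m$.

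Combining the two parts:
\begin{itemize}
\item If $n=\exp(G/Z(G))$ is finite, then $r'^{2n}$ is the identity on $G$-parts, and $r'^{2n+1}=r'$ because the $E$-part satisfies $\epsilon^k=\epsilon$ for all $k\ge1$. This proves (2).
\item If $|E|=1$, the $E$-part is trivial, so $r'^{2n}=\id$. Conversely, $r_B^{2n}=\id$ forces bijectivity, so the $E$-part must be injective, i.e.\ $|E|=1$. This gives the final ``in particular'' claim, which I read as $r_B^{2n}=\id_{B\times B}$ iff $|E|=1$.
\item For (1), finite index and period give $r'^{i+p}=r'^i$, and hence $r'^{i+2p}=r'^i$. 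Feeding a suitable pair $(u,v)$ into this identity forces conjugation by $pc$ (or by a bounded multiple of it) to be trivial for every $c$. So $G/Z(G)$ has bounded exponent, which is how ``periodic'' must be read here. The converse follows from (2).
\end{itemize}

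The main obstacle is exactly this converse in (1). One has to extract, from an equality $r'^{j}=r'^{l}$ with arbitrary $j\ne l$ (possibly of different parity), that a fixed multiple of every $c$ is central. I would handle it by evaluating both sides on pairs $(u,-u+c)$ and comparing first coordinates. When $j-l$ is even, the comparison gives conjugation invariance directly. When $j-l$ is odd, I would compose once more with $r'$ to reduce to the even case.
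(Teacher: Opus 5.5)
Your proof is correct and follows the paper's route. You conjugate $r_B$ to $r_\triangleright(a,b)=(b,-g_b+a+b)$: the paper invokes \cite[Lemma 2.12]{DoRySte24}, while you write $J(a,b)=(a,\lambda_a(b))$ explicitly and check the conjugation using the identity from the proof of \cref{t}. You then split $r_\triangleright$ along $B=G+E$ as the derived solution of the conjugation quandle of $(G,+)$ times the idempotent map $(e,e')\mapsto(e',e')$, exactly as the paper does.

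The difference is in the group factor. The paper cites \cite[Theorem 9, Proposition 10]{CCSt20-2} and \cite[Corollary 1.20]{AlSte25}, whereas you compute its powers directly through the invariant $u+v$. This makes the argument self-contained and shows that (1) really requires $G/Z(G)$ to have \emph{finite exponent}. Your reading of ``periodic'' is therefore forced. Take $G=\bigoplus_p H_p$, with $H_p$ the Heisenberg group of order $p^3$, and the trivial $RG$-semibrace $(G,+,+)$. Then $G/Z(G)$ is torsion, but $r_B(a,b)=(b,-b+a+b)$ has infinite order.

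One small slip: in your last paragraph, composing $r'^j=r'^l$ with $r'$ does not change the parity of $j-l$. What reduces to the even case is the doubling $r'^{i+2p}=r'^i$, which you already wrote in the bullet for (1). Alternatively, cancel the smaller power of the bijective $G$-factor and then square.
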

\begin{proof}
    By \cref{thm:twist_solution} and \cite[Lemma 2.12]{DoRySte24}, it is enough to prove the statement for the solution associated to the conjugation spindle on the additive right group $(B,+)$, namely, to the map $r_\triangleright:B\times B\to B\times B$ defined by $r_\triangleright(a,b) = (b,\, - g_b + a+b)$, for all $a,b\in B$.
    To this end, note that $r_\triangleright$ is the solution associated to the trivial $RG$-semibrace $(B,+,+)$, hence, by \cref{cor:GH_decomp}, we can identify the map $r_\triangleright$ with
    \begin{align*}
        r_\triangleright:\,& (G \times E) \times (G \times E) \to (G \times E) \times (G \times E), \\
        &\left( (g_a,e_a), (g_b,e_b) \right)\mapsto  \left( (g_b,e_b), (g_b \blacktriangleright g_a, e_b) \right)
    \end{align*}
    where $(G,\blacktriangleright)$ is the conjugation quandle of the additive group $(G,+)$.
    As a consequence, the solution $r_\triangleright$ is the direct product $r_{\blacktriangleright} \times r_E$ where $r_\blacktriangleright$ is the solution associated with $(G,\blacktriangleright)$ and $r_E$ is the idempotent solution associated to the $RG$-semibrace $(E,+,+)$.
    Thanks to \cite[Theorem 9]{CCSt20-2}, we deduce that $r_\triangleright$ has finite order if and only if $r_G$ has finite order.
    Moreover, by \cite[Corollary 1.20]{AlSte25}, $r_G$ has finite order if and only if $G/Z(G)$ is periodic.
    In particular, by \cite[Proposition 10]{CCSt20-2} in this case we have that $\indd{\left(r_\triangleright\right)}\in \{0,1\}$ and $\perr{\left(r_\triangleright\right)} = 2n$, where $n := \exp(G/Z(G))$. Therefore, the claim follows.
\end{proof}

\smallskip

\begin{remark}
Note that solutions obtained from left $RG$-semibraces 
have the same index and same order as those arising from left (cancellative) semibraces, cf. \cite[Corollary 13]{CCSt20-2}.
\end{remark}

\smallskip
Let us notice that index of $r_B$ depends exactly by the substructure $(E,+,\circ)$ of the arbitrary $RG$-semibrace $(B,+,\circ)$. On the other hand, although by \cref{prop_lambda_g_0} the structure $(G,+,\circ)$ is not generally a substructure of a left $RG$-semibrace $B$, the period of $r_B$ depends exactly on the additive group $(G,+)$. 
More specifically, the following result aligns with \cite[Corollary 13]{CCSt20-2} and \cite[Theorem 5.1]{JeAr19}.

\begin{corollary}\label[corollary]{prop-id-sol}
 Let $B$ be a left $RG$-semibrace and $r_B$ the solution associated with~$B$. Then $r_B$ is cubic, i.e., $r_B^3  = r_B$, if and only if $(G,+)$ is an abelian group. In particular, $r_B$ is idempotent if and only if $G=\{0\}$, namely, $(B,+) = (E,+)$  is a left zero semigroup.
\end{corollary}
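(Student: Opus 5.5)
Following the proof of the preceding proposition, I would first reduce to the solution $r_\triangleright$ of the conjugation spindle on $(B,+)$. By \cref{thm:twist_solution} and \cite[Lemma 2.12]{DoRySte24}, $r_B$ is conjugate to $r_\triangleright(a,b)=(b,-g_b+a+b)$ through a bijection of $B\times B$ built from the twist $\lambda$. Hence $r_B^k=r_B^l$ holds if and only if $r_\triangleright^k=r_\triangleright^l$. So it suffices to decide when $r_\triangleright^3=r_\triangleright$ and when $r_\triangleright^2=r_\triangleright$.

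For the cubic statement, I would use the decomposition from the previous proof: $r_\triangleright=r_\blacktriangleright\times r_E$. Here $r_E((g,e),(g',e'))$ acts on the $E$-coordinates by $(e,e')\mapsto(e',e')$, which is idempotent. The map $r_\blacktriangleright(g,g')=(g',-g'+g+g')$ is the bijective conjugation-quandle solution on $G$. Then $r_\triangleright^3=r_\triangleright$ holds iff $r_\blacktriangleright^3=r_\blacktriangleright$, since $r_E^3=r_E$ automatically. Because $r_\blacktriangleright$ is bijective, this is equivalent to $r_\blacktriangleright^2=\id$. I would compute directly:
\[
r_\blacktriangleright^2(g,g')=\bigl(-g'+g+g',\;-(-g'+g+g')+g'+(-g'+g+g')\bigr).
\]
Requiring the first component to equal $g$ for all $g,g'$ gives $g+g'=g'+g$, so $G$ must be abelian. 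Conversely, if $G$ is abelian, then $r_\blacktriangleright$ is the flip and its square is the identity. This part is the main computational step, but it is short.

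For idempotency, $r_B^2=r_B$ holds iff $r_\blacktriangleright^2=r_\blacktriangleright$, since $r_E$ is idempotent. By bijectivity this means $r_\blacktriangleright=\id$. Evaluating at $(g,g')$, the first component gives $g'=g$ for all $g,g'\in G$, so $G=\{0\}$; the converse is immediate. Finally, $G=B+0=\{0\}$ means $b+0=0$ for every $b$, and hence $b+c=b+0+e_c=e_c$ with $e_c=c$ for $c\in E$. Since $B=G+E=E$, the semigroup $(B,+)=(E,+)$ is the right zero semigroup of idempotents.

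The main obstacle I expect is making the product decomposition $r_\triangleright=r_\blacktriangleright\times r_E$ and the conjugacy invariance fully precise. Both can be quoted from the previous proof, via \cref{cor:GH_decomp}. I also need to check that the notion of "left zero" is consistent with the paper's convention: in the paper's own description $a+b=b$ on $E$, which is a right zero semigroup. I would phrase the final identification accordingly.
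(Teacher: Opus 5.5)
Your proposal is correct and follows the paper's route. The paper gives no separate proof; the corollary rests on the preceding proposition's reduction of $r_B$, via the twist conjugacy, to $r_\triangleright=r_\blacktriangleright\times r_E$, after which it cites that the period of $r_\triangleright$ is $2\exp(G/Z(G))$, whereas you check directly that $r_\blacktriangleright^2=\id$ iff $G$ is abelian and $r_\blacktriangleright=\id$ iff $G=\{0\}$. You are also right that, with the paper's conventions, $(B,+)=(E,+)$ is then a right zero semigroup ($a+b=b$), as in the paper's subsequent example, so ``left zero'' in the statement is a slip.
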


\smallskip

\begin{Ex}
    Let $(B,\circ)$ be a right group and define $a+b:= b$, for all $a,b\in B$, i.e., $(B,+)$ is a right zero semigroup. Then, $(B,+,\circ)$ is an $RG$-semibrace and the solution associated to $B$ is the map $r_B:B\times B\to B\times B$ given by
    \begin{align*}
        r_B(a,b) = (a\circ b,\, f_b),
    \end{align*}
    for all $a,b\in B$. Consistently with \cref{prop-id-sol}, $r_B$ is idempotent.
\end{Ex}

\smallskip

As for \cref{es_banale}, the solution arising from the following example of left $RG$-semibrace does not arise from a left (cancellative) semibrace.
\begin{Ex}
Let $B$ the left $RG$-semibrace in \cref{ex:endomorphism}. Observe that the solution $r_B$ cannot be obtained from any left (cancellative) semibrace structure $\left(B, +', \circ'\right)$ on the same underlying set $B$. Indeed, since $F=\{1,x\}$, we have that $\lambda_1(1)=1$ and $\lambda_x(x)=x$. Hence, the only possible candidates for the identity element $0$ of the group $\left(B, \circ'\right)$ are $1$ and $x$ (this is a consequence of the fact that in any left (cancellative) semibrace $\lambda_0(0)=0$). Assume that $0$ is $1$, then considering that, for all $a, b \in B$, $a \circ' b=a +' \lambda_a(b)$, an easy computation shows that the only idempotent with respect to the operation $+'$ is $0$ itself.
  The same argument applies if $0=x$. Then  $\left(B, +', \circ'\right)$ would actually be a skew brace, and this is a contradiction because the solution $r_B$ is not bijective.
\end{Ex}

\bigskip

\bigskip 

\section*{Acknowledgements}

\smallskip

A.A., M.M., and P.S. are partially supported by the University of Salento - Department of Mathematics and Physics ``E. De Giorgi”. They are members of GNSAGA (INdAM), and members of the nonprofit association ``AGTA-Advances in Group Theory and Applications". A.A. is supported by a scholarship
financed by the Ministerial Decree no. 118/2023, based on the NRRP - funded by the
European Union - NextGenerationEU - Mission 4.

\bigskip\bigskip

\makeatletter
\renewrobustcmd*{\mkbibemph}{}
\protected\long\def\blx@imc@mkbibemph#1{#1}
\makeatother

\printbibliography

@preamble{
   "\def\cprime{$'$} "
}

@incollection {Dr92,
    AUTHOR = {Drinfel{\cprime}d, V. G.},
     TITLE = {\textit{On some unsolved problems in quantum group theory}},
 BOOKTITLE = {Quantum groups ({L}eningrad, 1990)},
    SERIES = {Lecture Notes in Math.},
    VOLUME = {1510},
     PAGES = {1--8},
 PUBLISHER = {Springer, Berlin},
      YEAR = {1992},
   MRCLASS = {17B37 (16W30 81R50)},
  MRNUMBER = {1183474},
MRREVIEWER = {Yvette Kosmann-Schwarzbach},
       DOI = {https://doi.org/10.1007/BFb0101175},
       URL = {},
}

@article{COJeVaVe22,
title = {\textit{Left non-degenerate set-theoretic solutions of the Yang-Baxter equation and semitrusses}},
AUTHOR = {Colazzo, I. and Jespers, E. and {Van Antwerpen}, A. and Verwimp, C.},
journal = {Journal of Algebra},
volume = {610},
pages = {409-462},
year = {2022},
issn = {0021-8693},
doi = {https://doi.org/10.1016/j.jalgebra.2022.07.019},
}

@book {Ho95,
	AUTHOR = {Howie, J. M.},
	TITLE = {\textit{Fundamentals of semigroup theory}},
	SERIES = {London Mathematical Society Monographs. New Series},
	VOLUME = {12},
	NOTE = {{O}xford Science Publications},
	PUBLISHER = {The Clarendon Press, Oxford University Press, New York},
	YEAR = {1995},
	PAGES = {x+351},
	ISBN = {0-19-851194-9},
	MRCLASS = {20Mxx (20-02)},
	MRNUMBER = {1455373},
	MRREVIEWER = {P. M. Higgins},
}

@article {Rey,
    AUTHOR = {Reynolds, O.},
     TITLE = {\textit{On the dynamical theory of incompressible viscous fluids and
              the determination of the criterion}},
   JOURNAL = {Proc. Roy. Soc. London Ser. A},
  FJOURNAL = {Proceedings of the Royal Society. London. Series A.
              Mathematical, Physical and Engineering Sciences},
    VOLUME = {451},
      YEAR = {1995},
    NUMBER = {1941},
     PAGES = {5--47},
      ISSN = {0962-8444,2053-9169},
   MRCLASS = {76D99 (76-03)},
  MRNUMBER = {1363190},
       DOI = {https://doi.org/10.1098/rspa.1995.0118},
       URL = {},
}

@article{JePi25,
url = {},
title = {\textit{Diagonals of solutions of the {Y}ang--{B}axter equation}},
author = {Jedli\v cka, P. and Pilitowska, A.},
fjournal = {Forum Mathematicum},
journal = {Forum Math.},
doi={https://doi.org/10.1515/forum-2024-0409},
year = {2025},
lastchecked = {2025-11-14}
}

@article{CCSt20-2,
	AUTHOR = {Catino, F. and Colazzo, I. and Stefanelli, P.},
	TITLE = {\textit{The {M}atched {P}roduct of the {S}olutions to the
	{Y}ang--{B}axter {E}quation of {F}inite {O}rder}},
	JOURNAL = {Mediterr. J. Math.},
	FJOURNAL = {Mediterranean Journal of Mathematics},
	VOLUME = {17, 58},
	YEAR = {2020},
	NUMBER = { },
	PAGES = { },
	ISSN = {1660-5446},
	MRCLASS = {16T25 (16N20 16Y99 81R50)},
	MRNUMBER = {4067191},
	DOI = {https://doi.org/10.1007/s00009-020-1483-y},
	URL = {},
}

@article {Brze,
    AUTHOR = {Brzezi\'nski, T.},
     TITLE = {\textit{Trusses: between braces and rings}},
   JOURNAL = {Trans. Amer. Math. Soc.},
  FJOURNAL = {Transactions of the American Mathematical Society},
    VOLUME = {372},
      YEAR = {2019},
    NUMBER = {6},
     PAGES = {4149--4176},
      ISSN = {0002-9947,1088-6850},
   MRCLASS = {16Y99 (16T05)},
  MRNUMBER = {4009388},
MRREVIEWER = {Ilaria\ Colazzo},
       DOI = {https://doi.org/10.1090/tran/7705},
       URL = {},
}

@article {JeAr19,
    AUTHOR = {Jespers, E. and Van Antwerpen, A.},
     TITLE = {\textit{Left semi-braces and solutions of the {Y}ang-{B}axter
              equation}},
   JOURNAL = {Forum Math.},
  FJOURNAL = {Forum Mathematicum},
    VOLUME = {31},
      YEAR = {2019},
    NUMBER = {1},
     PAGES = {241--263},
      ISSN = {0933-7741},
   MRCLASS = {20M25 (16S36 16T25 20E22)},
  MRNUMBER = {3898225},
MRREVIEWER = {Shanghua Zheng},
       DOI = {https://doi.org/10.1515/forum-2018-0059},
}

@article {Ku83,
    AUTHOR = {Kunze, M.},
     TITLE = {\textit{Zappa products}},
   JOURNAL = {Acta Math. Hungar.},
  FJOURNAL = {Acta Mathematica Hungarica},
    VOLUME = {41},
      YEAR = {1983},
    NUMBER = {3-4},
     PAGES = {225--239},
      ISSN = {0236-5294},
   MRCLASS = {20M10 (20M35)},
  MRNUMBER = {703736},
MRREVIEWER = {Heinrich Seidel},
       doi = {https://doi.org/10.1007/BF01961311},
}

@article {Ru07,
    AUTHOR = {Rump, W.},
     TITLE = {\textit{Braces, radical rings, and the quantum {Y}ang-{B}axter
              equation}},
   JOURNAL = {J. Algebra},
  FJOURNAL = {Journal of Algebra},
    VOLUME = {307},
      YEAR = {2007},
    NUMBER = {1},
     PAGES = {153--170},
      ISSN = {0021-8693},
   MRCLASS = {16Y99 (16W30)},
  MRNUMBER = {2278047},
MRREVIEWER = {Gigel Militaru},
       DOI = {https://doi.org/10.1016/j.jalgebra.2006.03.040},
}

@article {GuVe17,
    AUTHOR = {Guarnieri, L. and Vendramin, L.},
     TITLE = {\textit{Skew braces and the {Y}ang-{B}axter equation}},
   JOURNAL = {Math. Comp.},
  FJOURNAL = {Mathematics of Computation},
    VOLUME = {86},
      YEAR = {2017},
    NUMBER = {307},
     PAGES = {2519--2534},
      ISSN = {0025-5718},
   MRCLASS = {16T25 (81R50)},
  MRNUMBER = {3647970},
MRREVIEWER = {Paola Stefanelli},
       doi = {https://doi.org/10.1090/mcom/3161},
}

@article {CaCoSt17,
    AUTHOR = {F. Catino  and I. Colazzo and P. Stefanelli},
     TITLE = {\textit{Semi-braces and the {Y}ang-{B}axter equation}},
   JOURNAL = {J. Algebra},
  FJOURNAL = {Journal of Algebra},
    VOLUME = {483},
      YEAR = {2017},
     PAGES = {163--187},
      ISSN = {0021-8693},
   MRCLASS = {16T25 (16N20 16Y99 81R50)},
  MRNUMBER = {3649817},
MRREVIEWER = {Leandro Vendramin},
       DOI = {https://doi.org/10.1016/j.jalgebra.2017.03.035},
       URL = {},
}

@article {FeSaFa04,
    AUTHOR = {Fenn, R. and Jordan-Santana, M. and Kauffman, L.},
     TITLE = {\textit{Biquandles and virtual links}},
   JOURNAL = {Topology Appl.},
  FJOURNAL = {Topology and its Applications},
    VOLUME = {145},
      YEAR = {2004},
    NUMBER = {1-3},
     PAGES = {157--175},
      ISSN = {0166-8641,1879-3207},
   MRCLASS = {57M27 (57M25)},
  MRNUMBER = {2100870},
MRREVIEWER = {Dale\ P. O. Rolfsen},
       DOI = {https://doi.org/10.1016/j.topol.2004.06.008},
       URL = {},
}

@book {ClPr61,
    AUTHOR = {A. H. Clifford and G. B. Preston},
     TITLE = {\textit{The algebraic theory of semigroups. {V}ol. {I}}},
    SERIES = {Mathematical Surveys, No. 7},
 PUBLISHER = {American Mathematical Society, Providence, R.I.},
      YEAR = {1961},
     PAGES = {xv+224},
   MRCLASS = {20.92},
  MRNUMBER = {0132791},
MRREVIEWER = {\v{S}t. Schwarz},
}

@article {Ya67,
    AUTHOR = {Yang, C. N.},
     TITLE = {\textit{Some exact results for the many-body problem in one dimension
              with repulsive delta-function interaction}},
   JOURNAL = {Phys. Rev. Lett.},
  FJOURNAL = {Physical Review Letters},
    VOLUME = {19},
      YEAR = {1967},
     PAGES = {1312--1315},
      ISSN = {},
   MRCLASS = {81.20},
  MRNUMBER = {261870},
MRREVIEWER = {S. Deser},
       DOI = {https://doi.org/10.1103/PhysRevLett.19.1312},
       URL = {},
}

@article {Ba72,
    AUTHOR = {Baxter, R. J.},
     TITLE = {\textit{Partition function of the eight-vertex lattice model}},
   JOURNAL = {Ann. Physics},
  FJOURNAL = {Annals of Physics},
    VOLUME = {70},
      YEAR = {1972},
     PAGES = {193--228},
      ISSN = {0003-4916},
   MRCLASS = {82.46},
  MRNUMBER = {290733},
MRREVIEWER = {S. Sherman},
       DOI = {https://doi.org/10.1016/0003-4916(72)90335-1},
       URL = {},
}

@article {CaCeSt22,
    AUTHOR = {F. Catino and F. Ced\'{o} and P. Stefanelli},
     TITLE = {\textit{Nilpotency in left semi-braces}},
   JOURNAL = {J. Algebra},
  FJOURNAL = {Journal of Algebra},
    VOLUME = {604},
      YEAR = {2022},
     PAGES = {128--161},
      ISSN = {0021-8693},
   MRCLASS = {16T25 (16Y99 81R50)},
  MRNUMBER = {4413279},
MRREVIEWER = {Leandro Vendramin},
       DOI = {https://doi.org/10.1016/j.jalgebra.2022.04.004},
       URL ={},
}

@article {Aguiar00,
    AUTHOR = {M. Aguiar},
     TITLE = {\textit{Pre-{P}oisson algebras}},
   JOURNAL = {Lett. Math. Phys.},
  FJOURNAL = {Letters in Mathematical Physics},
    VOLUME = {54},
      YEAR = {2000},
    NUMBER = {4},
     PAGES = {263--277},
      ISSN = {0377-9017,1573-0530},
   MRCLASS = {17B63 (18D50)},
  MRNUMBER = {1846958},
MRREVIEWER = {Thierry\ Lambre},
       DOI = {https://doi.org/10.1023/A:1010818119040},
}

@article {HuhuXing25,
    AUTHOR = {H. Zhang and X. Gao},
     TITLE = {\textit{Averaging operators on groups and {H}opf algebras}},
   JOURNAL = {Comm. Algebra},
  FJOURNAL = {Communications in Algebra},
    VOLUME = {53},
      YEAR = {2025},
    NUMBER = {12},
     PAGES = {5077--5100},
      ISSN = {},
   MRCLASS = {22E60 (08B20 16T05 16W99 17B38 17B40)},
  MRNUMBER = {4964932},
       DOI = {https://doi.org/10.1080/00927872.2025.2505071},
       URL = {},
}

@article {DoRySte24,
    AUTHOR = {Doikou, A. and Rybo{\l}owicz, B. and Stefanelli, P.},
     TITLE = {\textit{Quandles as pre-{L}ie skew braces, set-theoretic {H}opf algebras \& universal {$\mathcal{R}$}-matrices}},
   JOURNAL = {{\textit J. Phys. A}},
    VOLUME = {57},
      YEAR = {2024},
    NUMBER = {40},
     PAGES = {Paper No. 405203, 35},
     DOI = {https://doi.org/10.1088/1751-8121/ad7769}
}

@article {CCSt20-1,
    AUTHOR = {Catino, F. and Colazzo, I. and Stefanelli, P.},
     TITLE = {\textit{The matched product of set-theoretical solutions of the
              {Y}ang-{B}axter equation}},
   JOURNAL = {J. Pure Appl. Algebra},
  FJOURNAL = {Journal of Pure and Applied Algebra},
    VOLUME = {224},
      YEAR = {2020},
    NUMBER = {3},
     PAGES = {1173--1194},
      ISSN = {0022-4049,1873-1376},
   MRCLASS = {16T25 (16N20 16Y99 81R50)},
  MRNUMBER = {4009573},
MRREVIEWER = {Jo\~ao\ Matheus Jury Giraldi},
       DOI = {https://doi.org/10.1016/j.jpaa.2019.07.012},
       URL = {},
}

@article {Brz18,
    AUTHOR = {Brzezi\'nski, T.},
     TITLE = {\textit{Towards semi-trusses}},
   JOURNAL = {Rev. Roumaine Math. Pures Appl.},
  FJOURNAL = {Revue Roumaine de Math\'ematiques Pures et Appliqu\'ees.
              Romanian Journal of Pure and Applied Mathematics},
    VOLUME = {63},
      YEAR = {2018},
    NUMBER = {2},
     PAGES = {75--89},
      ISSN = {0035-3965},
   MRCLASS = {20M18 (16T05 16Y99)},
  MRNUMBER = {3812011},
  URL = {https://imar.ro/journals/Revue_Mathematique/pdfs/2018/2/2.pdf}
}

@article{AlSte25,
    author = {Albano, A. and Stefanelli, P.},
    title = {\textit{Generalized digroups, di-skew braces, and solutions of the set-theoretic Yang–Baxter equation}},
    journal = {Semigroup Forum},
    year = {2025},
    doi = {https://doi.org/10.1007/s00233-025-10606-2}
}

\bigskip \bigskip

\end{document}